\theoremstyle{plain}
\newtheorem{thm}{\protect\theoremname}[section]
  \theoremstyle{definition}
  \newtheorem{defn}[thm]{\protect\definitionname}
  \theoremstyle{remark}
  \newtheorem{rem}[thm]{\protect\remarkname}
  \theoremstyle{definition}
  \newtheorem{example}[thm]{\protect\examplename}
  \theoremstyle{plain}
  \newtheorem{prop}[thm]{\protect\propositionname}
  \theoremstyle{plain}
  \newtheorem{cor}[thm]{\protect\corollaryname}
  \theoremstyle{plain}
  \newtheorem{lem}[thm]{\protect\lemmaname}
\newenvironment{keywords}{ \noindent\footnotesize\textbf{Keywords and phrases:}}{}
\newenvironment{class}{\noindent\footnotesize\textbf{Mathematics subject classification 2010:}}{}
\newcommand*{\dive}{\operatorname{div}}
\newcommand*{\curl}{\operatorname{curl}}
\newcommand*{\grad}{\operatorname{grad}}
\newcommand*{\supp}{\operatorname{supp}}
\renewcommand*{\i}{\mathrm{i}}
\DeclareMathAccent{\Circ}{\mathalpha}{operators}{"17}
\newcommand{\interior}[1]{\Circ{#1}}
\renewcommand{\Re}{\operatorname{\mathfrak{Re}}}
\newcommand{\oi}[2]{\left]#1,#2 \right[}
\newcommand{\rga}[1]{\left]#1,\infty  \right[}
\newcommand{\rla}[1]{\left]-\infty ,#1 \right[}
\newcommand{\rlc}[1]{\left]-\infty ,#1 \right]}
\renewcommand{\tilde}{\widetilde}
\renewcommand*{\epsilon}{\varepsilon}
\renewcommand*{\rho}{\varrho}
\author{Rainer Picard, Sascha Trostorff \& Marcus Waurick}
  \providecommand{\corollaryname}{Corollary}
  \providecommand{\definitionname}{Definition}
  \providecommand{\examplename}{Example}
  \providecommand{\lemmaname}{Lemma}
  \providecommand{\propositionname}{Proposition}
  \providecommand{\remarkname}{Remark}
\providecommand{\theoremname}{Theorem}
\begin{document}
\makepreprinttitlepage

\author{ Rainer Picard, \\ Sascha Trostorff, \\ Marcus Waurick \\ Institut f\"ur Analysis, Fachrichtung Mathematik\\ Technische Universit\"at Dresden\\ Germany\\ rainer.picard@tu-dresden.de\\ sascha.trostorff@tu-dresden.de\\ marcus.waurick@tu-dresden.de }

\title{On a Comprehensive Class of Linear Control Problems.}

\maketitle
\begin{abstract} \textbf{Abstract.} We discuss a class of linear
control problems in a Hilbert space setting. This class encompasses
such diverse systems as port-Hamiltonian systems, Maxwell's equations
with boundary control or the acoustic equations with boundary control
and boundary observation. The boundary control and observation acts
on abstract boundary data spaces such that the only geometric constraint
on the underlying domain stems from requiring a closed range constraint
for the spatial operator part, a requirement which for the wave equation
amounts to the validity of a Poincare-Wirtinger-type inequality. We
also address the issue of conservativity of the control problems under
consideration. \end{abstract}

\begin{keywords} linear control systems, well-posedness, conservativity,
evolutionary equations \end{keywords}

\begin{class} 93C05 (Linear systems), 93C20 (Systems governed by
partial differential equations), 93C25 (Systems in abstract spaces)
 \end{class}

\newpage

\tableofcontents{} 

\newpage

\section{Introduction}

Finite-dimensional linear control problems are commonly discussed
in the form of a differential-algebraic system. The first system equation
links the \emph{state }$x$ taking values in $\mathbb{R}^{n}$ to
the \emph{control} or \emph{input }$u$, which takes values in $\mathbb{R}^{m}$
via matrices $A,B,\mu_{0}$ of appropriate size in the way 
\[
\mu_{0}\dot{x}(t)=Ax(t)+Bu(t),\quad t\in\rga0.
\]

If $\mu_{0}$ is boundedly invertible, the latter equation is also
known as \emph{state differential equation}, in general we could have
here a \emph{state differential-algebraic equation}. This equation
is completed by some initial condition for the part of the state variable
that gets differentiated, i.e. $(\mu_{0}x)(0+)=\mu_{0}x_{0}$. In
control theory one is mainly interested in the \emph{observation}
or \emph{output} $y$, which is a $\mathbb{R}^{l}$-valued function
given by the \emph{observation equation}
\[
y(t)=Cx(t)+Du(t)\quad t\in\rga0,
\]

for suitable matrices $C$ and $D$. 

Thus, denoting the time-derivative by $\partial_{0}$ and using the
whole real line $\mathbb{R}$ instead of $\rga0$, which transforms
the initial condition into a Dirac-$\delta$-source term on the right-hand
side, we arrive at the following system
\begin{equation}
\left(\begin{array}{cc}
\partial_{0}\mu_{0}-A & 0\\
-C & 1
\end{array}\right)\left(\begin{array}{c}
x\\
y
\end{array}\right)=\left(\begin{array}{c}
B\\
D
\end{array}\right)u+\left(\begin{array}{c}
\delta\otimes\mu_{0}x_{0}\\
0
\end{array}\right).\label{eq:system1}
\end{equation}
Here for time-continuous states $\Phi$ we have $\left(\delta\otimes x_{0}\right)\Phi\coloneqq x_{0}^{*}\Phi\left(0\right)$. 

In essence, with the added observation equation we are just considering
a larger differential-algebraic equation with an implied specific
block structure. 

Making $x,u$ the unknowns and treating $y$ as a term on the right-hand
side we arrive at the alternative formulation
\begin{equation}
\left(\begin{array}{cc}
A-\partial_{0}\mu_{0} & B\\
C & D
\end{array}\right)\left(\begin{array}{c}
x\\
u
\end{array}\right)=\left(\left(\begin{array}{cc}
A & B\\
C & D
\end{array}\right)-\partial_{0}\left(\begin{array}{cc}
\mu_{0} & 0\\
0 & 0
\end{array}\right)\right)\left(\begin{array}{c}
x\\
u
\end{array}\right)=\left(\begin{array}{c}
0\\
1
\end{array}\right)y-\left(\begin{array}{c}
\delta\otimes\mu_{0}x_{0}\\
0
\end{array}\right).\label{eq:system2}
\end{equation}
Whereas well-posedness issues are discussed in connection with respect
to (\ref{eq:system1}) (given control $u$, unknown output $y$) the
-- in a sense -- inverse problem (\ref{eq:system2}) (given output
$y$, unknown control $u$) is the usual starting point of discussion
of control system leading in the commonly discussed case $\mu_{0}=1$
to the analysis of $2\times2$ block matrices $\left(\begin{array}{cc}
A & B\\
C & D
\end{array}\right)$.

Systems of such general block structure have been generalized to the
infinite-dimensional case. In this case $A,B,C$ and $D$ are linear
operators in suitable Hilbert spaces. A solution theory for this problem
is rather straightforward, if one assumes that $\mu_{0}=1$ and $A$
is a generator of a strongly continuous semi-group and the operators
$B,C$ and $D$ are bounded linear operators. 

If one studies systems with boundary control, the assumption on $B$
and $C$ to be bounded has to be lifted. Hence, more sophisticated
techniques need to be used to establish well-posedness of such systems
even if $\mu_{0}=1$ is assumed, \cite{Salamon1987,Salamon1989,Curtain1989,Weiss1989-2,Engel1998,LasTrigg2000_1,LasTrigg2000_2,Weiss2003,Jacob2004}.
In the light of the rather sophisticated considerations required to
deal with such a situation the question arises if a different perspective
may shed some new insight on this problem class. Taking our guidance
from the discussion in a book by Lasiecka and Triggiani \cite{LasTrigg2000_1}
and two seminal papers by Tucsnak and Weiss \cite{Weiss2003,WeissStaffTucs},
where a class of systems is specified by $\left(\begin{array}{cc}
A & B\\
C & D
\end{array}\right)$ with $A$ being a semi-group generator and $B,C$ operators, which
are not bounded operators between state and control space is considered,
it has been found, \cite{PiTroWau_2012}, that by introducing an additional
state variable we get an equivalent system with a different $2\times2$-block
structure $\left(\begin{array}{cc}
A & B\\
C & D
\end{array}\right)$, where now $A$ is even skew-selfadjoint%
\footnote{For two operators $A,B$ defined on a Hilbert space, we say that $A$
is the adjoint of $B$ if $A=B^{*}$, and we say that $A$ is selfadjoint
if $A=A^{*}.$ In order to consistently extend this terminology to
the case, when $A=-B^{*}$, we choose to say that $A$ is the skew-adjoint
of $B$. Therefore, if $A=-A^{*}$, we say that $A$ is skew-selfadjoint.%
} and $B,C,D$ are all bounded linear operators. However, since $\mu_{0}$
is not invertible the (semi-)group for $A$ is of little help to obtain
well-posedness. Fortunately, there is a whole machinery to attack
differential-algebraic systems directly without resorting to one-parameter
semi-group techniques. The solution strategy relies solely on the
fact that -- in a suitable Hilbert space setting -- the whole differential-algebraic
system operator together with its adjoint is strictly positive definite.
Since -- by elementary Hilbert space functional analysis -- strict
positive definiteness of a closed operator $T$ and of its adjoint
$T^{\ast}$ implies that $0$ is an element in the resolvent sets
of both operators, it would probably be difficult to find a more basic
well-posedness class than this one. Surprisingly, however, this class
is spacious enough to cover all classical linear evolution problems
of mathematical physics and allows for convenient generalization to
more complex ``material relations''. The solution concept does not
require the existence of a fundamental solution. Therefore questions
naturally arising in the semi-group context such as whether an operator
is admissible or not (\cite{Engel1998,Jacob2004,LasTrigg2000_1,LasTrigg2000_2,Salamon1987,Weiss1989-2})
can be by-passed and replaced by a mere regularization requirement
rather than the well-posedness of the respective equations.

In this note, we shall present a unified way of looking at control
problems of this type as differential-algebraic systems, which may
make the solution theory more easily accessible. More precisely, we
will provide evidence that linear control problems can readily be
understood as evolutionary equations, a particular class of differential-algebraic
equations, which have been studied and used for many applications
to other fields, see \cite{PDE_DeGruyter}. We will show that a large
class of linear (boundary) control systems fits into this class. We
exemplify these observation with linear boundary control problems
studied by \cite{Weiss1989,Weiss2003,LasTrigg2000_1,LasTrigg2000_2,Weck2000}.
It should be noted, however, that the class presented here is much
larger, since we are not limited to cases, where one-parameter semi-group
strategies can successfully be utilized. This having been said, it
also has to be admitted that the results of this paper are merely
addressing the foundation of control problems. Actual control issues
such as controllability, reachability, stability etc. are beyond the
scope of this paper and may constitute future research.

In the process of developing our framework for boundary control systems
we shall also make a particular effort at developing a theoretical
setting for dealing with arbitrary boundaries of underlying domains,
which is of importance in more realistic applications, where boundary
smoothness is not reasonable to assume. This way we are saved from
using boundary trace results, which are hard to come by or unavailable
for example for domains with cuts, cusps, line segments or fractal
boundaries. However, the general well-posedness results are independent
of this theoretical setting, which in any case may also be substituted
by more classical boundary trace ideas, if requiring sufficient smoothness
of the boundary is not an issue.

A particular subclass of port-Hamiltonian systems (\cite{JacZwart1,GorZwarMa2006,ZwartGor2010})
can be discussed within this theory. As a by-product we give a possible
generalization of boundary control systems similar to port-Hamiltonian
systems to the case of more than one spatial dimension, which appeared
to be, at least to the best of the authors' knowledge, an open problem. 

We will also address the issue of conservativity. In fact, we show
a certain type of impedance conservativity \cite{Ball_Staffans,Malinen_Staffans_impedance,Malinen_Staffans_JDE,Staffans_passive,Staffasn_Jen,WeissStaffTucs,Weiss2003}.
Thereby, we show that the hypotheses on the structure of the material
law in \cite{PiTroWau_2012} can be weakened. We obtain a certain
general energy-balance equality, imposing assumptions on the structure
of the equation that are easily verified in applications. 

In Section \ref{sec:Functional-analytic-framework}, we give the functional
analytic preliminaries needed to discuss evolutionary equations in
the sense of \cite{Pi2009-1}. This includes the time-derivative realized
as a normal, continuously invertible operator and the notion of Sobolev-chains.

Section \ref{sec:Control-Systems-as} states the notion of abstract
linear control systems defined as a subclass of particular evolutionary
systems. We show well-posedness of the respective systems under easily
verifiable conditions on the structure of the operators involved.
In essence, this section recalls the well-posedness theorem of \cite{Pi2009-1}
including the notion of causality defined in \cite{Kalauch}. 

Section \ref{sec:Conservative-Systems} discusses the qualitative
property of conservativity for abstract linear control systems. In
order to show conservativity of abstract linear control systems, a
particular structure of the operators involved and a regularizing
property of the solution operator associated to the system is needed.
The regularizing property is slightly stronger than the one in \cite{PiTroWau_2012}.
As a trade-off, the structural requirements on the operators involved
are less restrictive. 

The subsequent section, Section \ref{sec:Boundary-control}, provides
a way to embed linear boundary control systems into abstract linear
control systems. For an account on boundary control systems dealt
with in the literature, we refer the reader to \cite{Avrov,Malinen_Staffans_impedance,Malinen_Staffans_JDE,Salamon1987,Weck2000,Weiss1989-2,Weiss2003,ZwartGor2010},
where also strategies from the theory of selfadjoint extensions of
symmetric operators come into play, \cite{Behrndt_bdy_rel,Behrndt_Kreusler_bdy_rel,Derkach_bdy_relations,SWIPs,BDY_systems}.
As a first illustrative example of boundary control systems we discuss
in Subsection \ref{sub:Port-Hamiltonian-systems} the notion of port-Hamiltonian
systems as introduced in \cite{JacZwart1}, also see \cite{JacZwartIsem}.
In order to give higher-dimensional analogues for a particular subclass
of port-Hamiltonian systems, we define abstract boundary data spaces
(Subsection \ref{sub:Boundary-Data-Spaces}). The latter can and will
be introduced in a purely operator-theoretic framework. Consequently,
in applications these spaces may be defined without any regularity
assumptions on the underlying domain. The main idea is to replace
the classical trace spaces, which may not be defined in the general
situation of irregular boundaries, with an abstract analogue of ``1-harmonic
functions''. Subsection \ref{sub:Boundary-Control-and} provides
the solution theory of a class of abstract linear control systems
with boundary control and boundary observation.

The last section, Section \ref{sec:Applications}, is devoted to illustrate
our previous findings. We give an alternative way to show the well-posedness
of Maxwell's equation with boundary control similar to the one discussed
in \cite{Weck2000} (Subsection \ref{sub:Maxwell's-Equation-with})
and the well-posedness of a wave equation with boundary control and
observation generalizing the one discussed in \cite{Weiss2003} (Subsection
\ref{sub:Boundary-control-and-wave}).

\section{Functional-Analytic Framework\label{sec:Functional-analytic-framework}}

In this section we introduce the framework for evolutionary equations,
which will be defined in the next section. The relevant statements
of the results can be found in more detail in \cite{PDE_DeGruyter}.
First, following \cite{Kalauch}, we define the time-derivative as
a normal, boundedly invertible operator in a suitable $L^{2}$-type
space:
\begin{defn}
For $\nu\in\rga0$ we denote by $H_{\nu,0}(\mathbb{R})$ the space
of all square-integrable functions%
\footnote{Throughout we identify the equivalence classes induced by the equality
almost everywhere with their representatives.%
} with respect to the exponentially weighted Lebesgue-measure $\exp(-2\nu t)\mbox{ d}t$,
equipped with the inner product given by 
\[
\langle f|g\rangle_{H_{\nu,0}(\mathbb{R})}\coloneqq\intop_{\mathbb{\mathbb{R}}}f(t)^{\ast}g(t)\exp(-2\nu t)\mbox{ d}t\quad(f,g\in H_{\nu,0}(\mathbb{R})).
\]
\end{defn}
\begin{rem}
From the definition of $H_{\nu,0}(\mathbb{R})$ we see that the operator
$\exp(-\nu m):H_{\nu,0}(\mathbb{R})\to L^{2}(\mathbb{R}),$ defined
by $\left(\exp(-\nu m)f\right)(t)=\exp(-\nu t)f(t)$, $t\in\mathbb{R}$,
is unitary. Furthermore, it is clear that the space $\interior C_{\infty}(\mathbb{R})$,
the space of indefinitely differentiable functions with compact support
on $\mathbb{R}$, is dense in $H_{\nu,0}(\mathbb{R}).$ \end{rem}
\begin{defn}
Let $\nu>0$. We denote by%
\footnote{For the space of $L^{2}$-functions defined on an open subset $\Omega\subseteq\mathbb{R}^{n}$
with distributional gradient lying in $L^{2}(\Omega)^{n}$ we use
the notation $H^{1}(\Omega)$. If the gradient is only locally square-integrable,
we write $H_{\text{loc}}^{1}(\Omega)$.%
} $\partial:H^{1}(\mathbb{R})\subseteq L^{2}(\mathbb{R})\to L^{2}(\mathbb{R})$
the usual weak derivative on $L^{2}(\mathbb{R}),$ which is known
to be skew-selfadjoint, i.e., $\partial^{*}=-\partial$. We set 
\[
\partial_{0,\nu}\coloneqq\exp(-\nu m)^{-1}(\partial+\nu)\exp(-\nu m)
\]
as the derivative operator on $H_{\nu,0}(\mathbb{R}).$ For convenience
we will write $\partial_{0}$ instead of $\partial_{0,\nu}$ if the
particular choice of $\nu>0$ is clear from the context. \end{defn}
\begin{rem}
The operator $\partial_{0,\nu}$ is normal with $\Re\partial_{0,\nu}=\nu$.
Moreover, since the operator $\exp(-\nu m)^{-1}\partial\exp(-\nu m)$
is skew-selfadjoint, we get that $0\in\rho(\partial_{0,\nu})$ and
$\|\partial_{0,\nu}^{-1}\|\leq\frac{1}{\nu}.$ To justify our choice
of $\partial_{0,\nu}$ as the derivative we compute $\partial_{0,\nu}\phi$
for $\phi\in\interior C_{\infty}(\mathbb{R})$:
\begin{align*}
\left(\partial_{0,\nu}\phi\right)(t) & =\exp(\nu t)\left((\partial+\nu)\exp(-\nu m)\phi\right)(t)\\
 & =\exp(\nu t)(-\nu\exp(-\nu m)\phi+\exp(-\nu m)\phi'+\nu\exp(-\nu m)\phi)(t)\\
 & =\phi'(t)
\end{align*}
for all $t\in\mathbb{R}.$ 
\end{rem}
Next we need the (standard) concept of so-called Sobolev-chains or
rigged Hilbert spaces. The proofs of the following assertions can
be found, for instance, in \cite[Chapter 2]{PDE_DeGruyter}.
\begin{defn}
Let $H$ be a Hilbert space and $C:D(C)\subseteq H\to H$ be a densely
defined, closed linear operator with $0\in\rho(C).$ For $k\in\mathbb{Z}$
we set $H_{k}(C)$ as the completion of the domain $D(C^{k})$ with
respect to the norm $|C^{k}\cdot|_{H}.$ Then $\left(H_{k}(C)\right)_{k\in\mathbb{Z}}$
becomes a sequence of Hilbert spaces such that $H_{k}(C)$ is continuously
and densely embedded into $H_{k-1}(C)$ for each $k\in\mathbb{Z}.$
We call $(H_{k}(C))_{k\in\mathbb{Z}}$ the \emph{Sobolev-chain of
$C$. }We define 
\begin{align*}
H_{\infty}(C) & \coloneqq\bigcap_{k\in\mathbb{Z}}H_{k}(C),\\
H_{-\infty}(C) & \coloneqq\bigcup_{k\in\mathbb{Z}}H_{k}(C).
\end{align*}
\end{defn}
\begin{rem}
For $k\in\mathbb{N}\setminus\{0\}$ the operator 
\begin{align*}
C:H_{k}(C) & \to H_{k-1}(C)\\
x & \mapsto Cx
\end{align*}

\end{rem}
is unitary. For $-k\in\mathbb{N}$ consider the operator 
\begin{align*}
C:H_{\infty}(C)\subseteq H_{k}(C) & \to H_{k-1}(C)\\
x & \mapsto Cx.
\end{align*}

This operator turns out to be densely defined, isometric with dense
range, hence it can be extended to a unitary operator (again denoted
by $C$) $C:H_{k}(C)\to H_{k-1}(C).$ 

$ $
\begin{rem}
~

(a) The Hilbert space $H_{k}(C)$ for $k\in\mathbb{Z}$ can be identified
with the dual space $H_{-k}(C^{\ast})^{\ast}$ using the following
unitary mapping 
\begin{align*}
U:H_{k}(C) & \to H_{-k}(C^{\ast})^{\ast}\\
x & \mapsto\left(y\mapsto\langle C^{k}x|\left(C^{\ast}\right)^{-k}y\rangle_{H}\right).
\end{align*}
This allows an extension of the inner product $\langle\cdot\,|\,\cdot\rangle$
in $H$ to a continuous sesqui-linearform 
\begin{align*}
\langle\cdot\,|\,\cdot\rangle:H_{k}(C)\times H_{-k}(C^{\ast}) & \to\mathbb{C}
\end{align*}
in the sense of the dual pairing $\left(H_{k}(C),H_{-k}(C^{\ast})\right).$
We will not distinguish between the inner product given on $H$ and
its extension to such pairings.

(b) Let $U$ be a Hilbert space and $A:H_{1}(C)\to U$ be a linear
bounded operator. Then the dual operator $A':U^{\ast}\to H_{1}(C)^{\ast}$
can be identified with the operator $A^{\diamond}:U\to H_{-1}(C^{\ast}),$
by identifying the dual space $U^{\ast}$ with $U$ and the space
$H_{1}(C)^{\ast}$ with $H_{-1}(C^{\ast})$ according to the aforementioned
unitary mapping.\end{rem}
\begin{example}
Choosing $H=H_{\nu,0}(\mathbb{R})$ for some $\nu>0$ and $C=\partial_{0}$
we can construct the Sobolev-chain associated to $\partial_{0}$.
We will use the notation $H_{\nu,k}(\mathbb{R})\coloneqq H_{k}(\partial_{0})$
for $k\in\mathbb{Z}.$ The Dirac-distribution $\delta$ is an element
of $H_{\nu,-1}(\mathbb{R})$ and $\partial_{0}^{-1}\delta=\chi_{\rga0}.$\end{example}
\begin{rem}
For a densely defined closed linear operator $A:D(A)\subseteq H_{0}\to H_{1}$,
where $H_{0}$ and $H_{1}$ are two Hilbert spaces, we can construct
the Sobolev-chain to $|A|+\i$ and $|A^{\ast}|+\i$, respectively.
Then $A$ and $A^{\ast}$ can be established as bounded linear operators
\[
A:H_{k}(|A|+\i)\to H_{k-1}(|A^{\ast}|+\i)
\]
and
\[
A^{\ast}:H_{k}(|A^{\ast}|+\i)\to H_{k-1}(|A|+\i)
\]
for all $k\in\mathbb{Z}.$ 
\end{rem}
Not only the concept of Sobolev-chains is of use in the later sections
but also the one of Sobolev-lattices. A possible way to define them
is with the help of tensor product constructions. For the theory of
tensor products see e.g. \cite{Weidmann} and for the concept of Sobolev-lattices
we refer the reader to \cite[Chapter 2]{PDE_DeGruyter}.
\begin{rem}
Let $\nu>0$ and $H$ a Hilbert space. For a densely defined closed
linear operator $C:D(C)\subseteq H\to H$ with $0\in\rho(C)$ we consider
the canonical extension $1_{H_{\nu,0}(\mathbb{R})}\otimes C$ of $C$
to the space $H_{\nu,0}(\mathbb{R})\otimes H$, where $1_{H_{\nu,0}(\mathbb{R})}$
denotes the identity on $H_{\nu,0}(\mathbb{R}).$ Analogously we extend
$\partial_{0}$ to the space $H_{\nu,0}(\mathbb{R})\otimes H$ by
taking the tensor product $\partial_{0}\otimes1_{H}$ with the identity
$1_{H}$ on $H$. We re-use the notation $C$ and $\partial_{0}$
for their respective extensions to the space $H_{\nu,0}(\mathbb{R})\otimes H.$
Then the operators $\partial_{0}$ and $C$ can be established as
operators on $H_{\nu,-\infty}(\mathbb{R})\otimes H_{-\infty}(C)\coloneqq\bigcup_{k,j\in\mathbb{Z}}H_{\nu,k}(\mathbb{R})\otimes H_{j}(C).$
More precisely, 
\[
\partial_{0}:H_{\nu,k}(\mathbb{R})\otimes H_{j}(C)\to H_{\nu,k-1}(\mathbb{R})\otimes H_{j}(C)
\]
and 
\[
C:H_{\nu,k}(\mathbb{R})\otimes H_{j}(C)\to H_{\nu,k}(\mathbb{R})\otimes H_{j-1}(C)
\]
are unitary operators for each $k,j\in\mathbb{Z}.$ As a matter of
convenience, we will also write $H_{\nu,k}(\mathbb{R},H)$ for all
$k\in\mathbb{Z}\cup\left\{ -\infty,\infty\right\} $ for $H_{\nu,k}(\mathbb{R})\otimes H$
(or $\cup_{l\in\mathbb{Z}}H_{\nu,l}(\mathbb{R})\otimes H$ or $\cap_{l\in\mathbb{Z}}H_{\nu,l}(\mathbb{R})\otimes H$)
to stress the unitary equivalence of the tensor products of these
Hilbert spaces with the respective space of (generalized) Hilbert-space-valued
functions.
\end{rem}

\section{Control Systems as Special Evolutionary Problems\label{sec:Control-Systems-as} }

In Section \ref{sec:Boundary-control}, we shall show that many linear
control systems fit into the following particular class.
\begin{defn}
Let $H,V$ be Hilbert spaces, $M_{0},M_{1}\in L(H),$ $J\in L(V,H)$
and $A:D(A)\subseteq H\to H$ skew-selfadjoint. For $\nu\in\rga{0}$,
we define the set 
\[
\mathcal{E}_{M_{0},M_{1},A,J}^{\nu}\coloneqq\left\{ (x,f)\in H_{\nu,-\infty}(\mathbb{R},H\oplus V)|(\partial_{0}M_{0}+M_{1}+A)x=Jf\right\} .
\]
The set $\mathcal{E}_{M_{0},M_{1},A,J}\coloneqq\bigcup_{\nu>0}\mathcal{E}_{M_{0},M_{1},A,J}^{\nu}$
is called \emph{evolutionary system}. The system $\mathcal{E}_{M_{0},M_{1},A,J}$
is called \emph{well-posed} if there exists $\nu_{0}\in\rga{0}$ such
that for all $\nu\in[\nu_{0},\infty[$ the relation 
\[
\mathcal{S}_{M_{0},M_{1},A,J}^{\nu}\coloneqq\{(f,x)|(x,f)\in\mathcal{E}_{M_{0},M_{1},A,J}^{\nu}\cap H_{\nu,0}(\mathbb{R},H\oplus V)\}\subseteq H_{\nu,0}(\mathbb{R},V)\oplus H_{\nu,0}(\mathbb{R},H)
\]
defines a densely defined, continuous linear mapping from $H_{\nu,0}(\mathbb{R},V)$
to $H_{\nu,0}(\mathbb{R},H)$. We call $\mathcal{S}_{M_{0},M_{1},A,J}^{\nu}$
\emph{solution operator (for $\nu$)}. \end{defn}
\begin{thm}[\cite{PiTroWau_2012,Pi2009-1}]
\label{Thm: sol_th_ev_syst}Let $\mathcal{E}_{M_{0},M_{1},A,J}$
be an evolutionary system. Assume that $M_{0}=M_{0}^{*}$ and that
there exists $c\in\rga{0}$ such that
\[
\nu M_{0}+\Re M_{1}\geq c>0
\]
for all sufficiently large $\nu\in\rga{0}$. Then $\mathcal{E}_{M_{0},M_{1},A,J}$
is well-posed and the corresponding solution operator\emph{ }$\mathcal{S}_{M_{0},M_{1},A,J}^{\nu}$
is \emph{causal}, i.e., for all $a\in\mathbb{R}$ we have 
\[
\chi_{\rlc a}(m_{0})\mathcal{S}_{M_{0},M_{1},A,J}^{\nu}\chi_{\rlc a}(m_{0})=\chi_{\rlc a}(m_{0})\mathcal{S}_{M_{0},M_{1},A,J}^{\nu},
\]
where $\chi_{\rlc a}(m_{0})$ denotes the operator of multiplying
with the cut-off function $\chi_{\rlc a}.$ 
\end{thm}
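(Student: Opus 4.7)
The plan is to establish continuous invertibility of the closure of the operator $B \coloneqq \partial_{0}M_{0}+M_{1}+A$, viewed as an operator in $H_{\nu,0}(\mathbb{R},H)$ on a suitable dense domain such as $H_{\nu,1}(\mathbb{R})\otimes D(A)$, by deriving a uniform coercivity estimate. The solution operator is then obtained by composing $\overline{B}^{-1}$ with the bounded $J$, and causality follows from a standard growth argument in the exponentially weighted spaces.

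\textbf{Step 1 (Energy estimate).} For $x$ in the dense subspace $H_{\nu,1}(\mathbb{R})\otimes D(A)$ I would compute
\[
\Re\langle Bx,x\rangle_{H_{\nu,0}(\mathbb{R},H)} = \Re\langle\partial_{0}M_{0}x,x\rangle + \langle(\Re M_{1})x,x\rangle + \Re\langle Ax,x\rangle.
\]
The last term vanishes since $A$ is skew-selfadjoint. For the first term, I would use that $M_{0}=M_{0}^{\ast}$ commutes with $\partial_{0}$ (the former acts on $H$, the latter on the time factor), write $\partial_{0}=\nu+\mathrm{i}T$ with $T$ skew-selfadjoint commuting with $M_{0}$, and conclude that $\Re\langle\partial_{0}M_{0}x,x\rangle=\nu\langle M_{0}x,x\rangle$ because $\langle TM_{0}x,x\rangle$ is real (as $TM_{0}$ is selfadjoint on the relevant domain). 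Combined, the assumption yields $\Re\langle Bx,x\rangle\geq c\|x\|^{2}$ for all $\nu$ above some $\nu_{0}$.

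\textbf{Step 2 (Invertibility).} Cauchy--Schwarz gives $\|Bx\|\geq c\|x\|$, so $\overline{B}$ is injective with closed range. The formal adjoint $B^{\ast}=-\partial_{0}M_{0}+2\nu M_{0}+M_{1}^{\ast}-A$ (obtained from $\partial_{0}^{\ast}=2\nu-\partial_{0}$) admits the \emph{same} lower bound $\langle(\nu M_{0}+\Re M_{1})x,x\rangle\geq c\|x\|^{2}$ on its natural dense domain, so its closure is also injective. Hence $\overline{B}$ has dense range and is therefore a continuous bijection with $\|\overline{B}^{-1}\|\leq 1/c$. Setting $\mathcal{S}^{\nu}_{M_{0},M_{1},A,J}\coloneqq\overline{B}^{-1}J$ yields a bounded linear operator from $H_{\nu,0}(\mathbb{R},V)$ to $H_{\nu,0}(\mathbb{R},H)$ of norm at most $\|J\|/c$, whose graph coincides with $\mathcal{S}^{\nu}_{M_{0},M_{1},A,J}$ as defined.

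\textbf{Step 3 (Causality).} Since all operators involved commute with time-translation, one reduces to $a=0$: it suffices to show that if $\supp f\subseteq[0,\infty[$ then $\supp\mathcal{S}^{\nu}f\subseteq[0,\infty[$. Note that $\|f\|_{H_{\mu,0}}\leq\|f\|_{H_{\nu,0}}$ for all $\mu\geq\nu$, and the construction in Steps 1--2 applied at level $\mu$ produces the same solution (by uniqueness on the common domain). Hence $\|\mathcal{S}^{\nu}f\|_{H_{\mu,0}}\leq(\|J\|/c)\|f\|_{H_{\nu,0}}$ uniformly in $\mu\geq\nu_{0}$. If $\mathcal{S}^{\nu}f$ had a non-trivial restriction to some $]{-\infty},-\varepsilon]$, then the weighted norm $\|\chi_{]{-\infty},-\varepsilon]}\,\mathcal{S}^{\nu}f\|_{H_{\mu,0}}\geq e^{\mu\varepsilon}\|\chi_{]{-\infty},-\varepsilon]}\,\mathcal{S}^{\nu}f\|_{L^{2}}$ would blow up as $\mu\to\infty$, contradicting the uniform bound.

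\textbf{Main obstacle.} The technical heart is Step 1, specifically justifying $\Re\langle\partial_{0}M_{0}x,x\rangle=\nu\langle M_{0}x,x\rangle$ on a core for the closure, which hinges on the joint spectral/commutativity structure of the tensor extension together with selfadjointness of $M_{0}$. A secondary subtlety in Step 3 is ensuring the $\nu$-independence of the solution, i.e.\ that the maximal-domain closures at different levels $\mu\geq\nu\geq\nu_{0}$ produce consistent inverses; this is handled by uniqueness built into the coercivity estimate.
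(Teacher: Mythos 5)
The paper itself gives no proof of this theorem---it is quoted from \cite{Pi2009-1,PiTroWau_2012}---and your overall route is exactly the one taken in those references: show that $\overline{B}$, $B\coloneqq\partial_{0}M_{0}+M_{1}+A$, and its adjoint are strictly positive definite, so that $\overline{B}$ is continuously invertible with $\|\overline{B}^{-1}\|\leq 1/c$, and obtain causality from the blow-up of the exponential weight as $\mu\to\infty$ (your Step 3 is precisely the argument of \cite{Kalauch,Pi2009-1}, including the reduction by translation invariance and the independence of the solution of the weight $\nu$). Your Step 1 is also correct in substance, up to a terminological slip: $\partial_{0,\nu}=\nu+S$ with $S$ \emph{skew-selfadjoint}, so you should write $S=\mathrm{i}T$ with $T$ \emph{selfadjoint}; as stated (``$T$ skew-selfadjoint'' together with ``$TM_{0}$ selfadjoint'') the two claims are inconsistent, though the intended cancellation $\Re\langle SM_{0}x,x\rangle=0$ is fine.

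There is, however, a genuine gap in Step 2. Dense range of $\overline{B}$ is equivalent to $N\bigl(\overline{B}^{\,*}\bigr)=\{0\}$, where $\overline{B}^{\,*}$ is the full Hilbert-space adjoint. What you establish is injectivity of the \emph{closure of the formal adjoint} $B^{\dagger}=-\partial_{0}M_{0}+2\nu M_{0}+M_{1}^{*}-A$ on its natural intersection domain; but one only knows $\overline{B^{\dagger}}\subseteq\overline{B}^{\,*}$, and this inclusion may a priori be strict, so injectivity of the smaller operator does not yield injectivity of $\overline{B}^{\,*}$. The symmetry of the situation does not rescue the argument either: $\bigl(\overline{B^{\dagger}}\bigr)^{*}\supseteq\overline{B}$ leaves you with the same problem on the other side. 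What the cited proofs actually do is establish the coercivity estimate for \emph{every} $y\in D\bigl(\overline{B}^{\,*}\bigr)$---equivalently, show that $D(\partial_{0}^{*})\cap D(A)$ is a core for $\overline{B}^{\,*}$---by mollifying in time with the bounded resolvent-type operators $(1+\epsilon\partial_{0}^{*})^{-1}$, which commute with $M_{0}$, $M_{1}$ and with $A$ because they act on the temporal tensor factor only; the commutation structure you correctly identify as the heart of Step 1 is exactly what is needed to close Step 2, and your ``Main obstacle'' paragraph points at the wrong step. A smaller point of the same flavour: identifying the relation $\mathcal{S}_{M_{0},M_{1},A,J}^{\nu}$ (defined through distributional solutions in $H_{\nu,-\infty}(\mathbb{R},H)$) with the graph of $\overline{B}^{-1}J$ requires that every $H_{\nu,0}$-solution of the extended equation lies in $D(\overline{B})$, i.e.\ that the closure coincides with the maximal realization; this follows from applying the same invertibility one rung lower in the Sobolev chain (cf.\ \cite{PDE_DeGruyter}) and should at least be mentioned.
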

The following proposition can be found in \cite{PiTroWau_2012}. The
basic fact, which is used in the proof is that $\partial_{0,\nu}^{-1}$
commutes with $\mathcal{S}_{M_{0},M_{1},A,J}^{\text{\ensuremath{\nu}}}$
for a well-posed evolutionary system $\mathcal{E}_{M_{0},M_{1},A,J}^{\text{ }}$,
for all sufficiently large $\nu\in\rga{0}$.
\begin{prop}
\label{Prop: distributional_time}Let $\mathcal{E}_{M_{0},M_{1},A,J}$
be a well-posed evolutionary system. Then, for all sufficiently large
$\nu\in\rga{0}$, we have that $\mathcal{S}_{M_{0},M_{1},A,J}^{\text{\ensuremath{\nu}}}$
uniquely extends to a continuous linear operator from $H_{\nu,k}(\mathbb{R},V)$
to $H_{\nu,k}(\mathbb{R},H)$ for all $k\in\mathbb{Z}$. \end{prop}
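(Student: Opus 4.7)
The plan is to write $S := \mathcal{S}_{M_{0},M_{1},A,J}^{\nu}$, to take as input the commutativity $\partial_{0}^{-1}S = S\partial_{0}^{-1}$ on $H_{\nu,0}(\mathbb{R},V)\to H_{\nu,0}(\mathbb{R},H)$ highlighted in the remark preceding the proposition, and to exploit the fact recalled from the Sobolev-lattice discussion in Section~\ref{sec:Functional-analytic-framework} that for every $k\in\mathbb{Z}$ and every Hilbert space $X\in\{V,H\}$ the operator $\partial_{0}^{k}\colon H_{\nu,k}(\mathbb{R},X)\to H_{\nu,0}(\mathbb{R},X)$ is unitary. Iteration of the commutativity immediately yields $\partial_{0}^{-k}S = S\partial_{0}^{-k}$ for every $k\in\mathbb{N}$, and inverting these identities on the corresponding ranges produces the analogous relation with $\partial_{0}^{k}$ in place of $\partial_{0}^{-k}$ on appropriate subspaces.

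With this, I define the extension at level $k\in\mathbb{Z}$ by conjugating $S$ with the unitary $\partial_{0}^{k}$,
\[
S_{k}\;\coloneqq\;\partial_{0}^{-k}\,S\,\partial_{0}^{k}\colon H_{\nu,k}(\mathbb{R},V)\to H_{\nu,k}(\mathbb{R},H).
\]
Boundedness with $\|S_{k}\|\le\|S\|$ is immediate from the unitarity of $\partial_{0}^{\pm k}$ between the relevant levels of the Sobolev-chain. For consistency with the original $S$, I pick $f$ in the common subspace $H_{\nu,k}(\mathbb{R},V)\cap H_{\nu,0}(\mathbb{R},V)$, set $g:=\partial_{0}^{k}f\in H_{\nu,0}(\mathbb{R},V)$, and use the commutativity to compute
\[
S_{k}f \;=\; \partial_{0}^{-k}Sg \;=\; S\partial_{0}^{-k}g \;=\; S\partial_{0}^{-k}\partial_{0}^{k}f \;=\; Sf.
\]
Uniqueness of the continuous extension then follows because the intersection $H_{\nu,k}(\mathbb{R},V)\cap H_{\nu,0}(\mathbb{R},V)$, which equals $H_{\nu,k}(\mathbb{R},V)$ for $k\ge 0$ and $H_{\nu,0}(\mathbb{R},V)$ for $k\le 0$, is dense in $H_{\nu,k}(\mathbb{R},V)$; this is built into the definition of the Sobolev-chain, and in particular $\interior C_{\infty}(\mathbb{R})\otimes V$ is a common dense subspace of every level.

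The main obstacle is essentially organisational: one has to track signs carefully when promoting the single commutativity $\partial_{0}^{-1}S = S\partial_{0}^{-1}$ to the identity $\partial_{0}^{-k}S\partial_{0}^{k}f = Sf$ needed on both sides of $k=0$, and one must justify interchanging $\partial_{0}^{k}$ for positive exponents with $S$ on the relevant domains by rewriting everything in terms of $\partial_{0}^{-k}$ applied to elements of $H_{\nu,0}$. Apart from this, the argument contains no genuinely analytic step: once the unitarity of $\partial_{0}^{k}$ on the Sobolev-chain is in place, the extension $S_{k}$ is literally a unitary conjugate of a bounded operator, so both existence of the continuous extension and the norm estimate are automatic.
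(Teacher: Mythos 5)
Your argument is correct and is essentially the proof the paper intends: the text defers details to \cite{PiTroWau_2012} but explicitly names the commutativity of $\partial_{0,\nu}^{-1}$ with $\mathcal{S}_{M_{0},M_{1},A,J}^{\nu}$ as the basic fact, and your conjugation $S_{k}=\partial_{0}^{-k}S\partial_{0}^{k}$ by the unitaries of the Sobolev-chain, with the commutativity giving consistency on the common core and density giving uniqueness, is exactly how that fact produces the extension. The sign bookkeeping you flag for $k<0$ is handled correctly by rewriting everything in terms of $\partial_{0}^{-m}$, $m=-k$, applied to elements of $H_{\nu,0}(\mathbb{R},V)$.
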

\begin{rem}
This proposition provides a way to model initial value problems, since
initial conditions can be represented as a Dirac-$\delta$-source
term, which turns out to be an element of the space $H_{\nu,-1}(\mathbb{R},H).$
\end{rem}
We can now describe abstract linear control systems as particular
evolutionary systems.
\begin{defn}
An evolutionary system $\mathcal{E}_{M_{0},M_{1},A,J}^{\text{ }}$
is called \emph{abstract linear control system} if there exist Hilbert
spaces $H_{0},H_{1},Y,U_{1},$ a densely defined, closed linear operator
$F:D(F)\subseteq H_{0}\to H_{1}$, $B\in L(U_{1},H)$ such that $H=H_{0}\oplus H_{1}\oplus Y$,
$A=\left(\begin{array}{ccc}
0 & -F^{*} & 0\\
F & 0 & 0\\
0 & 0 & 0
\end{array}\right)$, $V=H\oplus U_{1},$ and $J=\left(\begin{array}{cc}
1 & B\end{array}\right)$. The Hilbert spaces $H_{0}\oplus H_{1}$, $U_{1}$ and $Y$ are called
\emph{state, control }and \emph{observation space, }respectively\emph{.}
We also write $\mathcal{C}_{M_{0},M_{1},F,B}$ to denote an abstract
linear control system. \end{defn}
\begin{cor}
\label{cor:control_well_posed}Let $\mathcal{C}_{M_{0},M_{1},F,B}$
be an abstract linear control system. Assume that $M_{0}$ is selfadjoint
and that 
\[
\nu M_{0}+\Re M_{1}\geq c>0
\]
holds for all sufficiently large $\nu\in\rga{0}$. Then $\mathcal{C}_{M_{0},M_{1},F,B}$
is well-posed and the corresponding solution operators are causal.
The solution operators uniquely extend to continuous linear operators
from $H_{\nu,k}(\mathbb{R},H\oplus U_{1})$ to $H_{\nu,k}(\mathbb{R},H)$
for all $k\in\mathbb{Z}$ and $\nu\in\rga{0}$ sufficiently large.\end{cor}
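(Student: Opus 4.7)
The plan is to recognize the corollary as an essentially immediate specialisation of Theorem \ref{Thm: sol_th_ev_syst} and Proposition \ref{Prop: distributional_time} to the specific block structure of an abstract linear control system; the only thing to check independently is that the hypotheses of those two results are satisfied in this setting. Since $M_0=M_0^{\ast}$ and the positive-definiteness estimate $\nu M_0+\Re M_1\geq c>0$ for large $\nu$ are assumed verbatim, the remaining ingredients to verify are that the operator $J=(1\;B):H\oplus U_1\to H$ lies in $L(V,H)$ and that $A$ is skew-selfadjoint as an unbounded operator on $H=H_0\oplus H_1\oplus Y$.

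The boundedness of $J$ is immediate from $B\in L(U_1,H)$, so the first substantive point is the skew-selfadjointness of
\[
A=\begin{pmatrix} 0 & -F^{\ast} & 0\\ F & 0 & 0\\ 0 & 0 & 0\end{pmatrix},\quad D(A)=D(F)\oplus D(F^{\ast})\oplus Y.
\]
I would argue this in two steps. First, the $2\times 2$ block $\begin{pmatrix} 0 & -F^{\ast}\\ F & 0\end{pmatrix}$ acting on $H_0\oplus H_1$ with domain $D(F)\oplus D(F^{\ast})$ is a classical example of a skew-selfadjoint operator whenever $F$ is densely defined and closed: skew-symmetry follows by a direct computation of the inner products $\langle (u,v),A(u',v')\rangle$ and $\langle A(u,v),(u',v')\rangle$ using only the definition of $F^{\ast}$, while the inclusion $D(A^{\ast})\subseteq D(A)$ is obtained from the defining property of the adjoint of $F$ in each component. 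Second, direct-summing this skew-selfadjoint operator with the zero operator on the Hilbert space $Y$ preserves skew-selfadjointness, giving the claim for $A$.

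With $A$ skew-selfadjoint and all other hypotheses verified, Theorem \ref{Thm: sol_th_ev_syst} applied to $\mathcal{E}_{M_0,M_1,A,J}$ directly yields both well-posedness of $\mathcal{C}_{M_0,M_1,F,B}$ and causality of the associated solution operators $\mathcal{S}_{M_0,M_1,A,J}^{\nu}$ for all sufficiently large $\nu>0$. The final assertion about extension to $H_{\nu,k}(\mathbb{R},H\oplus U_1)\to H_{\nu,k}(\mathbb{R},H)$ for every $k\in\mathbb{Z}$ is then obtained by invoking Proposition \ref{Prop: distributional_time} with the same data, since that proposition applies to any well-posed evolutionary system.

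The only step requiring any genuine argument, rather than simply unpacking definitions, is the verification that $A$ is skew-selfadjoint on the indicated block domain, and this amounts to the standard lemma about $\begin{pmatrix} 0 & -F^{\ast}\\ F & 0\end{pmatrix}$; I do not expect any further obstacle, since once $A$ is known to be skew-selfadjoint the corollary reduces to a literal citation of the two preceding results.
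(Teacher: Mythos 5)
Your proposal is correct and follows essentially the same route as the paper, whose entire proof consists of observing that the block operator $\left(\begin{smallmatrix}0 & -F^{*} & 0\\ F & 0 & 0\\ 0 & 0 & 0\end{smallmatrix}\right)$ is skew-selfadjoint and then citing Theorem \ref{Thm: sol_th_ev_syst} and Proposition \ref{Prop: distributional_time}. You merely make explicit the standard verification (skew-selfadjointness of $\left(\begin{smallmatrix}0 & -F^{*}\\ F & 0\end{smallmatrix}\right)$ for densely defined closed $F$, preserved under direct sum with the zero operator on $Y$, plus boundedness of $J$) that the paper leaves implicit.
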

\begin{proof}
Observing that $\left(\begin{array}{ccc}
0 & -F^{*} & 0\\
F & 0 & 0\\
0 & 0 & 0
\end{array}\right)$ is a skew-selfadjoint operator, we are in the situation of Theorem
\ref{Thm: sol_th_ev_syst} and Proposition \ref{Prop: distributional_time}. 
\end{proof}

\section{Conservative Systems\label{sec:Conservative-Systems}}

In this section, we consider a qualitative property of solutions to
particular linear evolutionary equations, namely that of conservativity.
For this, a suitable regularizing property has to be additionally
imposed. As a slightly modified version to the definition given in
\cite{PiTroWau_2012}, we define (locally) regularizing systems as
follows:
\begin{defn}
Let $\mathcal{E}_{M_{0},M_{1},A,J}$ be a well-posed evolutionary
system. We say that $\mathcal{E}_{M_{0},M_{1},A,J}$ is \emph{(locally)
regularizing} if the following conditions are satisfied

\begin{enumerate}[(a)]

\item  There exists $U\subseteq D(A)$ dense in $H$ such that for
all $T\in\mathbb{R}$ and $\nu\in\rga{0}$ sufficiently large 
\[
\chi_{\rla T}(m_{0})P_{0}\left((\partial_{0}M_{0}+M_{1}+A)^{-1}\delta\otimes M_{0}-\chi_{\rga0}\otimes P_{0}\right)[U]\subseteq\chi_{\rla T}(m_{0})[H_{\nu,1}(\mathbb{R},H)],
\]
where $P_{0}:H\to H$ denotes the orthogonal projector onto $M_{0}[H],$
the range of $M_{0}.$ 

\item  There exists $C\in\rga0$ such that for all $\Phi\in H$ we
have for all $T\in\mathbb{R}$ and $\nu\in\rga{0}$ sufficiently large
\[
\chi_{\rla T}(m_{0})\left(\partial_{0,\nu}M_{0}+M_{1}+A\right)^{-1}(\delta\otimes M_{0}\Phi)\in H_{\nu,0}(\mathbb{R},H)
\]
and \foreignlanguage{english}{
\[
\left|\chi_{\rla{T}}\left(m_{0}\right)\:\left(\left(\partial_{0}M_{0}+M_{1}+\mathcal{A}\right)^{-1}\delta\otimes M_{0}\Phi\right)\right|_{H_{\nu,0}(\mathbb{R};H)}\leq C\left|\Phi\right|_{H}.
\]
}

\end{enumerate}\end{defn}
\begin{rem}
As we shall see in our discussion of regularizing evolutionary systems,
it often suffices to study the following weaker norm on the left-hand
side of the estimate in (b): $|f|_{\epsilon,\nu,-1,1}:=\sup_{\phi\in H_{\nu,1}(-\epsilon,\epsilon;H),|\phi|\leqq1}|\langle\phi,f\rangle_{H_{\nu,0}(\mathbb{R},H)}|+|\chi_{\rga{\epsilon}}(m)f|_{H_{\nu,0}(\mathbb{R},H)}.$
Then the modified inequality to impose is: for all $T\in\mathbb{R}$
and $\nu\in\rga{0}$ sufficiently large and all $\epsilon\in\rga{0}$
there exists $C\in\rga{0}$ such that 
\[
\left|\chi_{_{\rla{T}}}\left(m_{0}\right)\:\left(\left(\partial_{0}M_{0}+M_{1}+\mathcal{A}\right)^{-1}\delta\otimes M_{0}\Phi\right)\right|_{\epsilon,\nu,-1,1}\leq C\left|\Phi\right|_{H}.
\]

\end{rem}
We first will consider a conservation property for evolutionary systems.
In the light of \cite{Weiss2003} this can be interpreted as a energy
balance equality. In fact we will see later on that this balance equality
may be interpreted as impedance conservativity, see e.g.~\cite{Ball_Staffans}
and also \cite{Malinen_Staffans_impedance,Malinen_Staffans_JDE,Staffans_passive,Staffasn_Jen,WeissStaffTucs}.
\begin{thm}
\label{thm:conservative_evo}Let $\mathcal{E}_{M_{0},M_{1},A,J}$
be a regularizing well-posed evolutionary system. Let $u_{0}\in H$
and consider the solution $x\in H_{\nu,-1}(\mathbb{R},H)$ of the
equation
\[
(\partial_{0}M_{0}+M_{1}+A)x=\delta\otimes M_{0}u_{0}.
\]

Then the following \emph{conservation equation} holds%
\footnote{Note that $\chi_{\rla T}(m_{0})x\in H_{\nu,0}(\mathbb{R},H)$ for
each $T\in\mathbb{R}$ according to the second assumption for regularizing
systems.%
} 
\[
\int_{[a,b]}\langle x|\Re M_{1}x\rangle_{H}=\frac{1}{2}\langle x|M_{0}x\rangle_{H}(a)-\frac{1}{2}\langle x|M_{0}x\rangle_{H}(b)
\]
for almost every $a,b\in\rga{0}$ with $b>a$.\end{thm}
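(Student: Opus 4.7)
The strategy is to test the equation
\[
(\partial_0 M_0 + M_1 + A)x = \delta \otimes M_0 u_0
\]
against $x$ on a subinterval $(a,b) \subseteq \rga{0}$, take the real part, and exploit that $A = -A^{\ast}$, that $M_0 = M_0^{\ast}$, and that $\delta$ is supported at $0 \notin [a,b]$. Formally this yields
\[
\tfrac{1}{2}\partial_0 \langle x|M_0 x\rangle_H + \langle x|\Re M_1 x\rangle_H = 0 \text{ on } \rga{0},
\]
whose integration over $[a,b]$ is the asserted conservation equation. The real obstacle is to make this rigorous: a priori $x$ lies only in $H_{\nu,-1}(\mathbb{R},H)$, so neither the pointwise trace $t \mapsto \langle x(t)|M_0 x(t)\rangle_H$, nor the chain rule, nor the fibrewise relation $x(t) \in D(A)$ is immediately available. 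The two regularizing assumptions are precisely what remedy this.

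I would first treat $u_0 \in U$, the dense subset from condition (a). Since $M_0 = M_0^{\ast}$ and $P_0$ projects onto the range of $M_0$, one has $M_0 = M_0 P_0 = P_0 M_0$, so the quadratic form $\langle x|M_0 x\rangle_H$ only sees $P_0 x$. By condition (a), $\chi_{\rla T}(m_0)\bigl(P_0 x - \chi_{\rga{0}} \otimes P_0 u_0\bigr) \in H_{\nu,1}(\mathbb{R},H)$ for every $T$, and since $\chi_{\rga 0}\otimes P_0 u_0$ coincides with the constant $P_0 u_0$ on $\rga{0}$, this gives $P_0 x \in H_{\nu,1}$ locally on $\rga 0$. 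Hence $t \mapsto P_0 x(t)$ is locally absolutely continuous with values in $H$, and $t \mapsto \langle P_0 x(t)|M_0 P_0 x(t)\rangle_H$ is absolutely continuous on $\rga{0}$ with distributional derivative $2\Re\langle x|\partial_0 M_0 x\rangle_H$.

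Using the equation on $\rga{0}$, where the delta source is absent, one obtains $A x = -\partial_0 M_0 x - M_1 x$ locally in $H_{\nu,0}(\mathbb{R},H)$: the first summand on the right is locally $H_{\nu,0}$ by the previous step, the second by $M_1 \in L(H)$ together with $\chi_{\rla T}(m_0) x \in H_{\nu,0}$ from condition (b). By the characterization of the canonical extension of the skew-selfadjoint operator $A$, this forces $x(t) \in D(A)$ for almost every $t \in \rga{0}$, and skew-selfadjointness then yields $\Re\langle x(t)|A x(t)\rangle_H = 0$ almost everywhere. Taking real parts of the pointwise inner product of the equation with $x$ and integrating over $[a,b] \subseteq \rga{0}$ gives
\[
\tfrac{1}{2}\langle x|M_0 x\rangle_H(b) - \tfrac{1}{2}\langle x|M_0 x\rangle_H(a) + \int_{[a,b]} \langle x|\Re M_1 x\rangle_H = 0,
\]
which is the conservation equation for $u_0 \in U$.

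For arbitrary $u_0 \in H$, I would invoke condition (b): if $u_0^{(n)} \in U$ with $u_0^{(n)} \to u_0$ in $H$, then the associated solutions satisfy $\chi_{\rla T}(m_0) x^{(n)} \to \chi_{\rla T}(m_0) x$ in $H_{\nu,0}(\mathbb{R},H)$ for every $T$, uniformly in $n$. Consequently $\langle x^{(n)}|\Re M_1 x^{(n)}\rangle_H \to \langle x|\Re M_1 x\rangle_H$ and $\langle x^{(n)}|M_0 x^{(n)}\rangle_H \to \langle x|M_0 x\rangle_H$ in $L^1_{\mathrm{loc}}(\rga{0})$, so testing the pointwise identity for $x^{(n)}$ against smooth compactly supported functions of the endpoints $a,b$ and letting $n \to \infty$ yields the identity as an equality of $L^1_{\mathrm{loc}}$-functions in $(a,b)$, hence pointwise for almost every $a,b \in \rga{0}$ with $b > a$. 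The hardest step is the first one: extracting from the abstract $H_{\nu,1}$-regularity in condition (a) the absolute continuity of the energy density that legitimizes the pointwise integration by parts in time.
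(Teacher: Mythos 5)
Your proposal is correct and takes essentially the same route as the paper's own proof: condition (a) on the dense set $U$ yields local $H^{1}$-in-time regularity of $P_0 x$ and forces $x(t)\in D(A)$ almost everywhere on $\rga{0}$, skew-selfadjointness of $A$ eliminates the spatial term from the real part, integration over $[a,b]$ gives the identity for $u_0\in U$, and condition (b) supplies the density argument for general $u_0\in H$. The only cosmetic deviations are that the paper obtains the local $H_{\nu,0}(\mathbb{R},H)$-membership of the solution by rewriting the equation for $x-\chi_{\rga{0}}\otimes u_0$ (exploiting $u_0\in D(A)$) rather than citing condition (b) directly, and that it passes to an almost-everywhere convergent subsequence in the approximation step where you instead test the identity in $L^{1}_{\mathrm{loc}}$ against smooth functions of the endpoints.
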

\begin{proof}
Let $v_{0}\in U$. Since $\mathcal{E}_{M_{0},M_{1},A,J}$ is well-posed
there is a solution $y\in H_{\nu,-1}(\mathbb{R},H)$ of 
\[
(\partial_{0}M_{0}+M_{1}+A)y=\delta\otimes M_{0}v_{0}.
\]
This can be re-written as
\begin{equation}
\partial_{0}M_{0}(y-\chi_{\rga{0}}\otimes v_{0})+M_{1}(y-\chi_{\rga{0}}\otimes v_{0})+A(y-\chi_{\rga{0}}\otimes v_{0})=-\chi_{\rga{0}}\otimes M_{1}v_{0}-\chi_{\rga{0}}\otimes Av_{0}\label{eq:ivp_2}
\end{equation}
from which we read off that $y-\chi_{\rga{0}}\otimes v_{0}\in H_{\nu,0}(\mathbb{R},H)$
and hence $y\in H_{\nu,0}(\mathbb{R},H).$ Let $\phi\in\interior C_{\infty}(\rga{0})$
and set $T\coloneqq\sup\supp\phi.$ By assumption we have that $\chi_{\rla T}(m_{0})P_{0}(y-\chi_{\rga{0}}\otimes v_{0})\in\chi_{\rla T}(m_{0})[H_{\nu,1}(\mathbb{R},H)]$
and hence we get from (\ref{eq:ivp_2}) that 
\[
y-\chi_{\rga{0}}\otimes v_{0}\in\chi_{\rla T}(m_{0})[H_{\nu,0}(\mathbb{R},H_{1}(A+1))].
\]
Since $v_{0}\in U\subseteq D(A)$ we obtain that $y\in\chi_{\rla T}(m_{0})[H_{\nu,0}(\mathbb{R},H_{1}(A+1))]$.
We apply $\Re\langle\phi y|\cdot\rangle_{H_{\nu,0}(\mathbb{R},H)}$
to (\ref{eq:ivp_2}) and obtain
\[
\Re\langle\phi y|\partial_{0}M_{0}(y-\chi_{\rga{0}}\otimes v_{0})\rangle_{H_{\nu,0}(\mathbb{R},H)}+\Re\langle\phi y|M_{1}y\rangle_{H_{\nu,0}(\mathbb{R},H)}+\Re\langle\phi y|Ay\rangle_{H_{\nu,0}(\mathbb{R},H)}=0.
\]
Since $y$ takes values in the domain of $A$ and since $A$ is skew-selfadjoint,
we get
\begin{equation}
\Re\langle\phi y|\partial_{0}M_{0}(y-\chi_{\rga{0}}\otimes v_{0})\rangle_{H_{\nu,0}(\mathbb{R},H)}+\Re\langle\phi y|M_{1}y\rangle_{H_{\nu,0}(\mathbb{R},H)}=0.\label{eq:pre_cons_1}
\end{equation}
Since this holds for every $\phi\in\interior C_{\infty}(\rga{0})$
it follows that 
\begin{equation}
\Re\langle y|\partial_{0}M_{0}(y-\chi_{\rga{0}}\otimes v_{0})\rangle_{H}=-\Re\langle y|M_{1}y\rangle_{H}\mbox{ a.e. on }\rga{0}.\label{eq:pre_cons_2}
\end{equation}
Let $a,b\in\rga{0}$ with $a<b.$ From $\chi_{\rla b}(m_{0})P_{0}(y-\chi_{\rga{0}}\otimes v_{0})\in\chi_{\rla b}(m_{0})[H_{\nu,1}(\mathbb{R},H)]$
we get that $\left(P_{0}y\right)^{\prime}\in L^{2}(]a,b[,H)$ with
\[
(P_{0}y)'=\partial_{0}P_{0}(y-\chi_{\rga{0}}\otimes v_{0})\mbox{ on }]a,b[
\]
and thus, integrating equation (\ref{eq:pre_cons_2}) over $[a,b]$
gives 
\[
\frac{1}{2}\langle y|M_{0}y\rangle_{H}(a)=\int_{[a,b]}\langle y|\Re M_{1}y\rangle_{H}+\frac{1}{2}\langle y|M_{0}y\rangle_{H}(b).
\]
Let now $u_{0}\in H$ and $(v_{n})_{n}$ a sequence in $U$ converging
to $u_{0}$ in $H$. For $n\in\mathbb{N}$ let $y_{n}\coloneqq(\partial_{0}M_{0}+M_{1}+A)^{-1}\delta\otimes M_{0}v_{n}$
and $x\coloneqq(\partial_{0}M_{0}+M_{1}+A)^{-1}\delta\otimes M_{0}u_{0}$.
Then for every $T\in\mathbb{R}$ we can estimate:
\begin{align*}
|\chi_{\rlc T}(x-y_{n})|_{H_{\nu,0}(\mathbb{R},H)} & =|\chi_{\rlc T}(\partial_{0}M_{0}+M_{1}+A)^{-1}(\delta\otimes M_{0}u_{0}-\delta\otimes M_{0}v_{n})|_{H_{\nu,0}(\mathbb{R},H)}\\
 & \le C|u_{0}-v_{n}|_{H}
\end{align*}
where $C$ is chosen according to assumption (b) for regularizing
systems. As $n\to\infty$ we may assume $y_{n}\to x$ almost everywhere
on $\rlc b$ by re-using the notation for a suitable subsequence of
$(y_{n})_{n\in\mathbb{N}}$ and consequently $\int_{[a,b]}\langle y_{n}|\Re M_{1}y_{n}\rangle_{H}\to\int_{[a,b]}\langle x|\Re M_{1}x\rangle_{H}$
for all $a,b\in\mathbb{R}.$ Thus, the conservation equation for $x$
holds almost everywhere.
\end{proof}

\subsection*{On the Structure of Conservative Control Systems}

For the particular case of an abstract linear control systems, we
shall derive now a different conservation property based on our observation
concerning evolutionary systems. Following the block structure of
the operator matrix $A$ for the operators $M_{0}$ and $M_{1}$ we
shall denote the corresponding entries of $M_{0}$ and $M_{1}$ as
$M_{0,ij}$ and $M_{1,ij}$ respectively for $i,j\in\{0,1,2\}.$ Analogously
we may write the operator $B\in L(U_{1},H)=L(U_{1},H_{0}\oplus H_{1}\oplus Y)$
as a row vector $(B_{0}\: B_{1}\: B_{2})$, where $B_{i}\in L(U_{1},H_{i})$
for $i\in\{0,1\}$ and $B_{2}\in L(U_{1},Y).$
\begin{thm}
\label{thm:conservative_control}Let $\mathcal{C}_{M_{0},M_{1},F,B}$
be an abstract linear control system. Assume that $M_{0}$ is selfadjoint
and that there exists $c>0$ such that for all $\nu>0$ large enough,
we have $\nu M_{0}+\Re M_{1}\ge c$. Moreover, assume that $\mathcal{C}_{M_{0},M_{1},F,B}$
is a locally regularizing evolutionary system and that $M_{0,20}=0,$
$M_{0,21}=0$, $M_{0,22}=0$. Assume the \emph{compatibility conditions}%
\footnote{Note that the condition $\nu M_{0}+\Re M_{1}\ge c$ together with
$M_{0,20}=0,$ $M_{0,21}=0$, $M_{0,22}=0$ implies that $M_{1,22}$
is continuously invertible.%
}
\begin{align*}
\left(M_{1,22}^{-1}M_{1,20}\right)^{*}B_{2} & =B_{0}\text{{\,\ and\,}}\left(M_{1,22}^{-1}M_{1,21}\right)^{*}B_{2}=B_{1}.
\end{align*}

Then for $(v,w,y)\in H_{\nu,-1}(\mathbb{R},H_{0}\oplus H_{1}\oplus Y)$
and $u\in H_{\nu,0}(\mathbb{R},U)$ satisfying 
\begin{align*}
\left(\partial_{0}M_{0}+M_{1}+\left(\begin{array}{ccc}
0 & -F^{*} & 0\\
F & 0 & 0\\
0 & 0 & 0
\end{array}\right)\right)\left(\begin{array}{c}
v\\
w\\
y
\end{array}\right) & =\delta\otimes M_{0}\left(\begin{array}{c}
v_{0}\\
w_{0}\\
y_{0}
\end{array}\right)+Bu
\end{align*}
for some $(v_{0},w_{0},y_{0})\in H_{0}\oplus H_{1}\oplus Y$ the \emph{control
conservation equation }holds:\emph{
\begin{align*}
\frac{1}{2}\left\langle \left.\left(\begin{array}{c}
v\\
w\\
y
\end{array}\right)\right|M_{0}\left(\begin{array}{c}
v\\
w\\
y
\end{array}\right)\right\rangle _{H}(a)-\frac{1}{2}\left\langle \left.\left(\begin{array}{c}
v\\
w\\
y
\end{array}\right)\right|M_{0}\left(\begin{array}{c}
v\\
w\\
y
\end{array}\right)\right\rangle _{H}(b)=\\
\int_{[a,b]}\left(\left\langle \left.\left(\begin{array}{c}
v\\
w\\
y
\end{array}\right)\right|\Re M_{1}\left(\begin{array}{c}
v\\
w\\
y
\end{array}\right)\right\rangle _{H}-\left\langle \left.B_{2}u\right|\Re M_{1,22}^{-1}B_{2}u\right\rangle _{Y}\right)
\end{align*}
}for a.e. $a,b\in\rga0$ with $a<b$.
\end{thm}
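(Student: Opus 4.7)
The plan is to adapt the test-function argument from the proof of Theorem~\ref{thm:conservative_evo} to the controlled system and to show that, under the compatibility hypotheses, the extra right-hand-side contribution from $Bu$ collapses to the impedance expression $\langle B_{2}u|\Re M_{1,22}^{-1}B_{2}u\rangle_{Y}$. First I would exploit the structural assumptions on $M_{0}$: since $M_{0}$ is selfadjoint with $M_{0,20}=M_{0,21}=M_{0,22}=0$, both the third row and the third column of $M_{0}$ vanish. Combined with $(Az)_{2}=0$, the third component of the differential equation then reduces to the purely algebraic identity
\[
M_{1,20}v+M_{1,21}w+M_{1,22}y=B_{2}u\qquad\text{a.e. on }\mathbb{R},
\]
and the continuous invertibility of $M_{1,22}$ yields $y=M_{1,22}^{-1}(B_{2}u-M_{1,20}v-M_{1,21}w)$. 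A side-benefit is that the orthogonal projector $P_{0}$ onto $M_{0}[H]$ annihilates the third slot, so the regularizing hypothesis~(a), which only controls $P_{0}\cdot\;$, is strong enough to supply the $H_{\nu,1}$-regularity for the top two blocks needed in the integration by parts below.

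Next I would replay the proof of Theorem~\ref{thm:conservative_evo} nearly verbatim in our setting. Taking first $(v_{0},w_{0},y_{0})\in U$ from the dense set of the regularizing hypothesis, rewriting the equation for $z-\chi_{\rga{0}}\otimes(v_{0},w_{0},y_{0})^{T}$ so that the right-hand side becomes $-\chi_{\rga{0}}\otimes M_{1}z_{0}-\chi_{\rga{0}}\otimes Az_{0}+Bu\in H_{\nu,0}(\mathbb{R},H)$, and combining hypothesis~(a), the boundedness of $B$ and $u\in H_{\nu,0}(\mathbb{R},U_{1})$, one obtains $\chi_{\rla T}(m_{0})z\in H_{\nu,0}(\mathbb{R},H_{1}(A+1))$ for each $T\in\mathbb{R}$. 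Testing against $\phi z$ for $\phi\in\interior C_{\infty}(\rga{0})$ and using $A=-A^{\ast}$ to cancel $\Re\langle\phi z|Az\rangle=0$, the same chain of cancellations as in Theorem~\ref{thm:conservative_evo} produces
\[
\Re\langle\phi z|\partial_{0}M_{0}(z-\chi_{\rga{0}}\otimes z_{0})\rangle+\Re\langle\phi z|M_{1}z\rangle=\Re\langle\phi z|Bu\rangle,
\]
and letting $\phi$ range over $\interior C_{\infty}(\rga{0})$ yields the pointwise a.e.\ identity
\[
\Re\langle z|\partial_{0}M_{0}(z-\chi_{\rga{0}}\otimes z_{0})\rangle_{H}+\langle z|\Re M_{1}z\rangle_{H}=\Re\langle z|Bu\rangle_{H}.
\]

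The decisive new step, and where I expect the main obstacle, is the algebraic simplification of $\Re\langle z|Bu\rangle_{H}$. Splitting $\langle z|Bu\rangle_{H}=\langle v|B_{0}u\rangle+\langle w|B_{1}u\rangle+\langle y|B_{2}u\rangle$ and substituting the compatibility identities $B_{0}=(M_{1,22}^{-1}M_{1,20})^{\ast}B_{2}$ and $B_{1}=(M_{1,22}^{-1}M_{1,21})^{\ast}B_{2}$ transforms the first two summands into $\langle M_{1,22}^{-1}M_{1,20}v+M_{1,22}^{-1}M_{1,21}w|B_{2}u\rangle_{Y}$; inserting then the algebraic third equation in the form $M_{1,22}^{-1}M_{1,20}v+M_{1,22}^{-1}M_{1,21}w+y=M_{1,22}^{-1}B_{2}u$ collapses everything to $\langle M_{1,22}^{-1}B_{2}u|B_{2}u\rangle_{Y}$, whose real part equals $\langle B_{2}u|\Re M_{1,22}^{-1}B_{2}u\rangle_{Y}$. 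The remaining tasks — integrating over $[a,b]$ after interpreting $\partial_{0}P_{0}(z-\chi_{\rga{0}}\otimes z_{0})$ classically via hypothesis~(a), and approximating general $z_{0}\in H$ by elements of $U$ via the uniform estimate~(b) — are identical to the corresponding steps in Theorem~\ref{thm:conservative_evo}. The principal risk throughout is bookkeeping: every cross term between $(v,w)$ and $u$ must cancel so that only the impedance expression survives.
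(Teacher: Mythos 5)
Your proposal is correct and follows the paper's own proof essentially step for step: you replay the test-function argument of Theorem \ref{thm:conservative_evo} for initial data in the dense set $U$, exploit the purely algebraic third-row equation $M_{1,20}v+M_{1,21}w+M_{1,22}y=B_{2}u$ together with the compatibility conditions to collapse $\Re\langle z|Bu\rangle_{H}$ to $\Re\langle M_{1,22}^{-1}B_{2}u|B_{2}u\rangle_{Y}$, and then integrate over $[a,b]$ and pass to general initial data via estimate (b), exactly as the paper does. Your explicit observations that selfadjointness of $M_{0}$ forces its third column (not just the assumed third row) to vanish and that consequently $P_{0}$ annihilates the third component merely spell out what the paper leaves implicit in its phrase ``analogously to the proof of Theorem \ref{thm:conservative_evo}''.
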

Before we come to the proof of Theorem \ref{thm:conservative_control},
we will discuss an easy example. More precisely, we discuss a connection
to the so-called impedance conservativity in the sense of \cite{Ball_Staffans},
where the focus is on realization theory. 
\begin{example}
If we let $-\tilde{A}=\left(\begin{array}{cc}
0 & -F^{*}\\
F & 0
\end{array}\right)$, $M_{0}=\left(\begin{array}{ccc}
1 & 0 & 0\\
0 & 1 & 0\\
0 & 0 & 0
\end{array}\right),$ $M_{1}=\left(\begin{array}{ccc}
0 & 0 & 0\\
0 & 0 & 0\\
-C_{0} & -C_{1} & 1
\end{array}\right)$, $B=\left(\begin{array}{c}
B_{0}\\
B_{1}\\
D
\end{array}\right)$ for suitable (bounded) operators $B_{0},B_{1},C_{0},C_{1},D$. Abbreviating
$x=\left(\begin{array}{c}
v\\
w
\end{array}\right)$, $C=\left(\begin{array}{cc}
C_{0} & C_{1}\end{array}\right)$ and $\tilde{B}=\left(\begin{array}{c}
B_{0}\\
B_{1}
\end{array}\right)$, we may rewrite the equation%
\footnote{For simplicity, we assume zero initial conditions.%
} $\left(\partial_{0}M_{0}+M_{1}+\left(\begin{array}{cc}
-\tilde{A} & 0\\
0 & 0
\end{array}\right)\right)\left(\begin{array}{c}
x\\
y
\end{array}\right)=\left(\begin{array}{c}
\tilde{B}\\
D
\end{array}\right)u$ as
\[
\left(\begin{array}{c}
\partial_{0}x\\
y
\end{array}\right)=\left(\begin{array}{cc}
\tilde{A} & \tilde{B}\\
C & D
\end{array}\right)\left(\begin{array}{c}
x\\
u
\end{array}\right).
\]
Note that in this particular situation the block structure of $A$
corresponds to the one of $M_{0}$, which we did not assume in Theorem
\ref{thm:conservative_control}. However, in this particular case,
we may compare the asserted conservativity in Theorem \ref{thm:conservative_control}
with the conservative realizations of transfer functions in \cite{Ball_Staffans}.
Assume the operators $\tilde{A,}\tilde{B,}C,D$ formally satisfy the
equations in \cite[formula (1.7)]{Ball_Staffans}, i.e.,
\[
\tilde{A}+\tilde{A}^{*}=-\tilde{B}\tilde{B}^{*},\quad C=\tilde{B}^{*},\quad D=1.
\]
 Then by the skew-selfadjointness of $\tilde{A}$ we deduce that $0=\tilde{B}=C^{*}.$
With the notation from Theorem \ref{thm:conservative_control}, we
get that 
\[
\left(M_{1,22}^{-1}M_{1,20}\right)^{*}B_{2}=\left(1\cdot(-C_{0})\right)^{*}D=0=B_{0}
\]
and
\[
\left(M_{1,22}^{-1}M_{1,21}\right)^{*}B_{2}=\left(1\cdot(-C_{1})\right)^{*}D=0=B_{1},
\]
thus the operator equations of the above theorem are satisfied. The
corresponding control conservation equation reads
\[
\frac{1}{2}\left\langle \left.x\right|x\right\rangle (a)-\frac{1}{2}\left\langle \left.x\right|x\right\rangle (b)=\int_{[a,b]}\left(\left\langle \left.y\right|y\right\rangle -\left\langle \left.u\right|u\right\rangle \right)
\]
for a.e. $a,b\in\rga0$ with $a<b$. A more sophisticated example
will be discussed after the proof of Theorem \ref{thm:conservative_control}.\end{example}
\begin{proof}[Proof of Theorem \ref{thm:conservative_control}]
 Similarly to the proof of the conservation equation for evolutionary
systems, we show the conservation equation stated here for initial
data $(v_{0},w_{0},y_{0})\in U,$ where $U$ is chosen according to
the definition of regularizing systems. Hence, analogously to the
proof of Theorem \ref{thm:conservative_evo} we get that $(v,w,y)$
takes values in the domain of $\left(\begin{array}{ccc}
0 & -F^{*} & 0\\
F & 0 & 0\\
0 & 0 & 0
\end{array}\right)$ and that $M_{0}\left(\begin{array}{c}
v\\
w\\
y
\end{array}\right)$ is locally differentiable in $L_{\mathrm{loc}}^{2}\left(\rga{0},H\right).$
Let $\phi\in\interior C_{\infty}(\rga{0})$. Then, we obtain, similarly
to (\ref{eq:pre_cons_1}), the equation
\begin{align*}
 & \Re\left\langle \left.\phi\left(\begin{array}{c}
v\\
w\\
y
\end{array}\right)\right|\partial_{0}M_{0}\left(\left(\begin{array}{c}
v\\
w\\
y
\end{array}\right)-\chi_{\rga{0}}\otimes\left(\begin{array}{c}
v_{0}\\
w_{0}\\
y_{0}
\end{array}\right)\right)\right\rangle _{H_{\nu.0}(\mathbb{R},H)}\\
 & \quad+\left\langle \left.\phi\left(\begin{array}{c}
v\\
w\\
y
\end{array}\right)\right|\Re M_{1}\left(\begin{array}{c}
v\\
w\\
y
\end{array}\right)\right\rangle =\Re\left\langle \left.\phi\left(\begin{array}{c}
v\\
w
\end{array}\right)\right|\left(\begin{array}{c}
B_{0}\\
B_{1}
\end{array}\right)u\right\rangle _{H_{\nu.0}(\mathbb{R},H_{0}\oplus H_{1})}+\Re\left\langle \phi y|B_{2}u\right\rangle _{H_{\nu.0}(\mathbb{R},Y)}
\end{align*}
and hence
\begin{align}
\Re\left\langle \left.\left(\begin{array}{c}
v\\
w\\
y
\end{array}\right)\right|\partial_{0}M_{0}\left(\left(\begin{array}{c}
v\\
w\\
y
\end{array}\right)-\chi_{\rga{0}}\otimes\left(\begin{array}{c}
v_{0}\\
w_{0}\\
y_{0}
\end{array}\right)\right)\right\rangle _{H}+\left\langle \left.\left(\begin{array}{c}
v\\
w\\
y
\end{array}\right)\right|\Re M_{1}\left(\begin{array}{c}
v\\
w\\
y
\end{array}\right)\right\rangle _{H} & =\label{eq:eq_a_e}\\
\Re\left\langle \left.\left(\begin{array}{c}
v\\
w
\end{array}\right)\right|\left(\begin{array}{c}
B_{0}\\
B_{1}
\end{array}\right)u\right\rangle _{H_{0}\oplus H_{1}}+\Re\left\langle y|B_{2}u\right\rangle _{Y}\nonumber 
\end{align}
almost everywhere on $\rga0$. We aim to substitute $y$ in the mixed
term on the right-hand side. For this, consider the last row equation
of the general system
\[
M_{1,20}v+M_{1,21}w+M_{1,22}y=B_{2}u.
\]
Using that $M_{1,22}$ is continuously invertible due to the positive
definiteness constraint on $\nu M_{0}+\Re M_{1}$, we therefore get
that 
\begin{align*}
y & =-M_{1,22}^{-1}M_{1,20}v-M_{1,22}^{-1}M_{1,21}w+M_{1,22}^{-1}B_{2}u.
\end{align*}
Thus we have
\begin{align*}
\Re\langle B_{2}u|y\rangle_{Y} & =\Re\left\langle B_{2}u|-M_{1,22}^{-1}M_{1,20}v-M_{1,22}^{-1}M_{1,21}w+M_{1,22}^{-1}B_{2}u\right\rangle _{Y}\\
 & =\Re\left\langle B_{2}u|M_{1,22}^{-1}B_{2}u\right\rangle _{Y}-\Re\left\langle B_{2}u|M_{1,22}^{-1}M_{1,20}v+M_{1,22}^{-1}M_{1,21}w\right\rangle _{Y}.
\end{align*}
The first term on the right-hand side of (\ref{eq:eq_a_e}) may --
using the compatibility condition -- be computed as follows 
\begin{align*}
\Re\left\langle \left.\left(\begin{array}{c}
v\\
w
\end{array}\right)\right|\left(\begin{array}{c}
B_{0}\\
B_{1}
\end{array}\right)u\right\rangle _{H_{0}\oplus H_{1}} & =\Re\left\langle v|B_{0}u\right\rangle _{H_{0}}+\Re\left\langle w|B_{1}u\right\rangle _{H_{1}}\\
 & =\Re\left\langle M_{1,22}^{-1}M_{1,20}v|B_{2}u\right\rangle _{Y}+\Re\left\langle M_{1,22}^{-1}M_{1,21}w|B_{2}u\right\rangle _{Y}.
\end{align*}
Hence, 
\[
\Re\left\langle \left.\left(\begin{array}{c}
v\\
w
\end{array}\right)\right|\left(\begin{array}{c}
B_{0}\\
B_{1}
\end{array}\right)u\right\rangle _{H_{0}\oplus H_{1}}+\Re\left\langle y|B_{2}u\right\rangle _{Y}=\Re\left\langle B_{2}u|M_{1,22}^{-1}B_{2}u\right\rangle _{Y}.
\]
Now, integrating equation (\ref{eq:eq_a_e}) over $[a,b]$ yields
\begin{align*}
\frac{1}{2}\Re\left\langle \left.\left(\begin{array}{c}
v\\
w\\
y
\end{array}\right)\right|M_{0}\left(\begin{array}{c}
v\\
w\\
y
\end{array}\right)\right\rangle _{H}(a)-\frac{1}{2}\Re\left\langle \left.\left(\begin{array}{c}
v\\
w\\
y
\end{array}\right)\right|M_{0}\left(\begin{array}{c}
v\\
w\\
y
\end{array}\right)\right\rangle _{H}(b) & =\\
\int_{[a,b]}\left(\left\langle \left.\left(\begin{array}{c}
v\\
w\\
y
\end{array}\right)\right|\Re M_{1}\left(\begin{array}{c}
v\\
w\\
y
\end{array}\right)\right\rangle _{H}-\left\langle B_{2}u|\Re M_{1,22}^{-1}B_{2}u\right\rangle _{Y}\right)
\end{align*}
for all $a,b$ positive with $a<b$ . Using an approximation argument
as in the proof of Theorem \ref{thm:conservative_evo}, we get the
desired assertion.
\end{proof}

\begin{example}
In \cite{PiTroWau_2012} we studied the conservation property of the
following particular system, which is possible to deduce from the
(abstract) system treated in \cite{Weiss2003} (take $z\eqqcolon v$
and $\dot{z}\eqqcolon\zeta$): 
\begin{align*}
 & \left(\partial_{0}\left(\begin{array}{ccc}
1 & \left(\begin{array}{cc}
0 & 0\end{array}\right) & 0\\
\left(\begin{array}{c}
0\\
0
\end{array}\right) & \left(\begin{array}{cc}
1 & 0\\
0 & 0
\end{array}\right) & \left(\begin{array}{c}
0\\
0
\end{array}\right)\\
0 & \left(\begin{array}{cc}
0 & 0\end{array}\right) & 0
\end{array}\right)+\left(\begin{array}{ccc}
0 & \left(\begin{array}{cc}
0 & 0\end{array}\right) & 0\\
\left(\begin{array}{c}
0\\
0
\end{array}\right) & \left(\begin{array}{cc}
0 & 0\\
0 & 1
\end{array}\right) & \left(\begin{array}{c}
0\\
0
\end{array}\right)\\
0 & \left(\begin{array}{cc}
0 & \sqrt{2}\end{array}\right) & 1
\end{array}\right)\right.\\
 & +\left.\left(\begin{array}{ccc}
0 & \mathbb{DIV} & 0\\
\mathbb{GRAD} & \left(\begin{array}{cc}
0 & 0\\
0 & 0
\end{array}\right) & \left(\begin{array}{c}
0\\
0
\end{array}\right)\\
0 & \left(\begin{array}{cc}
0 & 0\end{array}\right) & 0
\end{array}\right)\right)\left(\begin{array}{c}
v\\
\left(\begin{array}{c}
\zeta\\
w
\end{array}\right)\\
y
\end{array}\right)\\
 & =\left(\begin{array}{c}
0\\
\left(\begin{array}{c}
0\\
-\sqrt{2}
\end{array}\right)\\
-1
\end{array}\right)u+\delta\otimes\left(\begin{array}{c}
z^{(1)}\\
\left(\begin{array}{c}
z^{(0)}\\
0
\end{array}\right)\\
0
\end{array}\right),
\end{align*}
where $\mathbb{GRAD}$ and $\mathbb{DIV}$ are suitable operators
such that $\mathbb{DIV}^{\ast}=-\mathbb{GRAD}.$ We remark here that
the notation $\mathbb{GRAD}$ and $\mathbb{DIV}$ serve as a reminder
of the fact that the former is the negative adjoint of the latter.
In \cite{PiTroWau_2012}, these operators are similarly constructed
as the operator $F$ and $-F^{*}$ in Section \ref{sub:Boundary-Control-and}.
We also refer to Section \ref{sub:Boundary-control-and-wave} equation
(\ref{eq:TW}) for a more specific example. It was shown that this
system is well-posed and locally regularizing. Furthermore the compatibility
conditions of Theorem \ref{thm:conservative_control} are satisfied
with 
\begin{align*}
M_{1,22} & =1,\quad M_{1,20}=0,\quad M_{1,21}=\left(\begin{array}{cc}
0 & \sqrt{2}\end{array}\right),\\
B_{0} & =0,\quad B_{1}=\left(\begin{array}{c}
0\\
-\sqrt{2}
\end{array}\right),\quad B_{2}=-1.
\end{align*}
 Thus, we end up with the conservation equation 
\[
\frac{1}{2}\left(|v(a)|^{2}+|\zeta(a)|^{2}\right)-\frac{1}{2}\left(|v(b)|^{2}+|\zeta(b)|^{2}\right)=\intop_{a}^{b}|w(t)|^{2}+\sqrt{2}\Re\langle w(t)|y(t)\rangle+|y(t)|^{2}-|u(t)|^{2}\mbox{ d}t.
\]
From the last row we read off the equation $\sqrt{2}w+y=-u$ and thus
$w=-\frac{1}{\sqrt{2}}(y+u).$ If we plug in this representation of
$w$ we get 
\[
\frac{1}{2}\left(|v(a)|^{2}+|\zeta(a)|^{2}\right)-\frac{1}{2}\left(|v(b)|^{2}+|\zeta(b)|^{2}\right)=\intop_{a}^{b}\frac{1}{2}|y(t)|^{2}-\frac{1}{2}|u(t)|^{2}\mbox{ d}t,
\]
which is the conservation equality in \cite[Corollary 1.5]{Weiss2003}. 
\end{example}

\section{Boundary Control\label{sec:Boundary-control}}

We shall now consider particular types of control equations involving
so-called boundary control. One may find the notion of boundary control
systems in the literature, see e.g.~\cite{Avrov,Malinen_Staffans_impedance,Malinen_Staffans_JDE}.
These are equations of the form
\[
u=Gx,\quad\dot{x}=Lx,\quad y=Kx
\]
subject to certain initial conditions for suitable linear operators
$G,L,K$ on suitable Hilbert spaces. The operators $G$ and $K$ are
thought of as trace mappings, where the first one is onto, and $L$
is assumed to be a generator of a $C_{0}$-semi-group if restricted
to the kernel of $G$. The precise (abstract) definition of the latter
operators is done with the help of so-called boundary triples. We
infer that these kind of boundary control systems are, if we focus
on well-posedness issues only, a mere non-homogeneous (abstract) Cauchy
problem. Indeed, using that $G$ is onto, we get $w$ such that $Gw=u$.
Introducing the new variable $\tilde{x}\coloneqq x-w\in N(G)$, we
arrive at the equation
\[
\dot{\tilde{x}}=L\tilde{x}-\dot{w}+Lw,
\]
which may be solved by the variation of constants formula. The output
$y$ can then be computed as follows $y=K(\tilde{x}+w)$. For a more
specific account of this strategy, we refer the reader to Section
\ref{sub:Maxwell's-Equation-with}. 

We will mainly focus on a class of boundary control systems where
both the equations on the boundary have terms of the input and output.
These are for example special types of port-Hamiltonian systems or
the control system discussed in \cite{Weiss2003}. Moreover, in the
later study, we will develop a framework that gives a possible generalization
of (a subclass of) port-Hamiltonian systems to more than one spatial
dimension.

As a first introductory example, we consider these types of port-Hamiltonian
systems (cf. e.g. \cite{JacZwart1,ZwartGor2010}).

\subsection{Port-Hamiltonian Systems\label{sub:Port-Hamiltonian-systems}}

The notion of port-Hamiltonian systems with boundary control and observation
as discussed in \cite[Section 11.2]{JacZwartIsem} can be described
as follows: Let $n\in\mathbb{N}$, $a,b\in\mathbb{R}$, $a<b$, $P_{0},P_{1}\in\mathbb{K}^{n\times n}$,
$\mathcal{H}\in L^{\infty}(\oi ab,\mathbb{K}^{n\times n})$, $W_{B},W_{C}\in\mathbb{K}^{n\times2n}$.
We assume the following: 

\begin{itemize}

\item $P_{1}$ is invertible and selfadjoint,

\item for a.e. $\zeta\in[a,b]$, we have $\mathcal{H}(\zeta)$ is
selfadjoint and there exist $m,M\in]0,\infty[$ such that for a.e.
$\zeta\in[a,b]$ we have $m\leq\mathcal{H}(\zeta)\leq M$,

\item $W_{B}$ and $W_{C}$ have full rank and $\left(\begin{array}{c}
W_{B}\\
W_{C}
\end{array}\right)$ is invertible.

\end{itemize} 

The authors of \cite{JacZwartIsem} considered the problem of finding
$(x,y)$ such that for given $x^{(0)}\in L^{2}(\oi ab,\mathbb{K}^{n})$
and $u\colon\rga0\to\mathbb{K}^{n}$ twice continuously differentiable
the following equations hold
\begin{align*}
\dot{x}(t)= & P_{1}\partial_{1}\mathcal{H}x(t)+P_{0}\mathcal{H}x(t)\\
u(t)= & W_{B}\frac{1}{\sqrt{2}}\left(\begin{array}{cc}
P_{1} & -P_{1}\\
1 & 1
\end{array}\right)\left(\begin{array}{c}
(\mathcal{H}x(t))(b)\\
(\mathcal{H}x(t))(a)
\end{array}\right)\\
y(t)= & W_{C}\frac{1}{\sqrt{2}}\left(\begin{array}{cc}
P_{1} & -P_{1}\\
1 & 1
\end{array}\right)\left(\begin{array}{c}
(\mathcal{H}x(t))(b)\\
(\mathcal{H}x(t))(a)
\end{array}\right)\\
x(0)= & x^{(0)},
\end{align*}
where $\partial_{1}$ is the distributional derivative with respect
to the spatial variable. Under particular assumptions on the matrices
involved a well-posedness result can be obtained by using $C_{0}$-semigroup
theory, see for instance \cite[Theorem 13.3.2]{JacZwartIsem}. Our
perspective to boundary control systems considers a particular subclass
of port-Hamiltonian (boundary control) systems. This subclass shows
the advantage that it can be generalized to an analogue of port-Hamiltonian
systems in more than one spatial dimension. The key assumption is
that $P_{1}$ is unitarily equivalent to a matrix of the form $\left(\begin{array}{cc}
0 & N^{*}\\
N & 0
\end{array}\right)$, where $N\in\mathbb{K}^{\ell\times\ell}$ with $2\ell=n$. Consequently,
$P_{1}\partial_{1}$ is replaced by $\left(\begin{array}{cc}
0 & \partial_{1}N^{*}\\
N\partial_{1} & 0
\end{array}\right)$ with suitable domain. The unknown $x$ decomposes into $(x_{0},x_{1})$.
Furthermore, we assume that we only control the boundary values of
$x_{1}$ and that the output is given in terms of the boundary values%
\footnote{This assumptions can be guaranteed for instance for the Timoshenko
beam equation, the vibrating string equation or the one-dimensional
heat equation with boundary control, \cite{JacZwartIsem}. It does,
however, not capture the one-dimensional transport equation.%
} of $x_{0}$. We are led to study the following problem, which corresponds
as we will see to port-Hamiltonian systems with boundary control and
observation as considered in \cite{JacZwartIsem} in a pure Hilbert
space setting provided our key assumptions are satisfied:

Let $\ell\in\mathbb{N}$, $N\in\mathbb{K}^{\ell\times\ell}$ invertible,
$n\coloneqq2\ell$, $M_{0}\in L(L^{2}(\oi ab,\mathbb{K}^{n}))$ selfadjoint
and strictly positive definite. Let $M_{1}\in L(L^{2}(\oi ab,\mathbb{K}^{n})\oplus\mathbb{K}^{4\ell})$
with the restriction of $\Re M_{1}$ to a linear mapping in $\mathbb{K}^{4\ell}$
assumed to be strictly positive definite, and $B_{0},B_{1}\in\mathbb{K}^{n\times n}$.
We define the operators
\begin{align*}
N\partial_{1}\colon H^{1}(\oi ab,\mathbb{K}^{\ell})\subseteq L^{2}(\oi ab,\mathbb{K}^{\ell}) & \to L^{2}(\oi ab,\mathbb{K}^{\ell})\\
f & \mapsto Nf',\\
\partial_{1}N^{*}\colon H^{1}(\oi ab,\mathbb{K}^{\ell})\subseteq L^{2}(\oi ab,\mathbb{K}^{\ell}) & \to H_{-1}(|\partial_{1}|+\i)\\
f & \mapsto(N^{*}f)'-(N^{*}f)(b)\cdot\delta_{b}+(N^{*}f)(a)\cdot\delta_{a}.
\end{align*}
The expression $N^{*}f(b)$ is well-defined by the 1-dimensional Sobolev
embedding theorem and 
\[
N^{*}f(b)\cdot\delta_{b}\colon H^{1}(\oi ab,\mathbb{K}^{\ell})\to\mathbb{K},\: g\mapsto\left\langle N^{*}f(b)|g(b)\right\rangle .
\]
We define the operator $C\colon H_{1}(|\partial_{1}|+\i)\to\mathbb{K}^{n},f\mapsto(-Nf(b),Nf(a))$,
in other words $C=\left(-N\delta_{b}\right)\oplus N\delta_{a}$. Identifying
$\mathbb{K}^{n}=\mathbb{K}^{\ell}\oplus\mathbb{K}^{\ell}$ with its
dual, we get $C^{\diamond}\colon\mathbb{K}^{\ell}\oplus\mathbb{K}^{\ell}\to H_{-1}(|\partial_{1}|+\i)$,
$(x,y)\mapsto-N^{*}x\cdot\delta_{b}+N^{*}y\cdot\delta_{a}$. 

We consider the following problem: Find $(x_{0},x_{1},w,y)\in H_{\nu,-1}(\mathbb{R};L^{2}(\oi ab,\mathbb{K}^{n})\oplus\mathbb{K}^{2n})$
such that for given $u\in H_{\nu,0}(\mathbb{R},\mathbb{K}^{n})$ and
$\xi_{0},\xi_{1}\in L^{2}(\oi ab,\mathbb{K}^{\ell})$ we have 
\begin{align}
 & \left(\partial_{0}\left(\begin{array}{ccc}
M_{0,00} & \left(\begin{array}{cc}
M_{0,01} & 0\end{array}\right) & 0\\
\left(\begin{array}{c}
M_{0,10}\\
0
\end{array}\right) & \left(\begin{array}{cc}
M_{0,11} & 0\\
0 & 0
\end{array}\right) & \left(\begin{array}{c}
0\\
0
\end{array}\right)\\
0 & \left(\begin{array}{cc}
0 & 0\end{array}\right) & 0
\end{array}\right)+M_{1}\right.\nonumber \\
 & \left.+\left(\begin{array}{ccc}
0 & \left(\begin{array}{cc}
-\partial_{1}N^{*} & C^{\diamond}\end{array}\right) & 0\\
\left(\begin{array}{c}
-N\partial_{1}\\
-C
\end{array}\right) & \left(\begin{array}{cc}
0 & 0\\
0 & 0
\end{array}\right) & \left(\begin{array}{c}
0\\
0
\end{array}\right)\\
0 & \left(\begin{array}{cc}
0 & 0\end{array}\right) & 0
\end{array}\right)\right)\left(\begin{array}{c}
x_{0}\\
\left(\begin{array}{c}
x_{1}\\
w
\end{array}\right)\\
y
\end{array}\right)=\delta\otimes\left(\begin{array}{c}
\xi_{0}\\
\xi_{1}\\
0\\
0
\end{array}\right)+\left(\begin{array}{c}
0\\
0\\
B_{1}u\\
B_{2}u
\end{array}\right).\label{eq:port_Ham}
\end{align}
In Section \ref{sub:Boundary-Control-and} we shall see that this
type of problem is well-posed in $H_{\nu,-1}(\mathbb{R},L^{2}(\oi ab,\mathbb{K}^{n})\oplus\mathbb{K}^{2n})$.
For convenience%
\footnote{This holds true if we assume the initial data $\xi_{0}$, $\xi_{1}$
and the control $u$ to be smooth enough.%
}, assume that $(x_{0},x_{1},w,y)\in H_{\nu,0}(\mathbb{R},L^{2}(\oi ab,\mathbb{K}^{n})\oplus\mathbb{K}^{2n})$
is a solution of the above system. Then, it follows that $\left(\begin{array}{ccc}
0 & \left(\begin{array}{cc}
-\partial_{1}N^{*} & C^{\diamond}\end{array}\right) & 0\\
\left(\begin{array}{c}
-N\partial_{1}\\
-C
\end{array}\right) & \left(\begin{array}{cc}
0 & 0\\
0 & 0
\end{array}\right) & \left(\begin{array}{c}
0\\
0
\end{array}\right)\\
0 & \left(\begin{array}{cc}
0 & 0\end{array}\right) & 0
\end{array}\right)\left(\begin{array}{c}
x_{0}\\
\left(\begin{array}{c}
x_{1}\\
w
\end{array}\right)\\
y
\end{array}\right)$ is an element of $H_{\nu,-1}(\mathbb{R},L^{2}(\oi ab,\mathbb{K}^{n})\oplus\mathbb{K}^{2n})$.
Consequently, we get that 
\[
\left(\begin{array}{cc}
-\partial_{1}N^{*} & C^{\diamond}\end{array}\right)\left(\begin{array}{c}
x_{1}\\
w
\end{array}\right)\in H_{\nu,-1}(\mathbb{R},L^{2}(\oi ab,\mathbb{K}^{\ell})).
\]
Thus, with $w=(w_{1},w_{2})$ 
\[
-N^{*}x'_{1}+(N^{*}x_{1})(b)\cdot\delta_{b}-(N^{*}x_{1})(a)\cdot\delta_{a}-N^{*}w_{1}\delta_{b}+N^{*}w_{2}\delta_{a}\in H_{\nu,-1}(\mathbb{R},L^{2}(\oi ab,\mathbb{K}^{\ell})).
\]
The latter, however, can only happen if $x_{1}(b)=w_{1}$ and $x_{1}(a)=w_{2}$.
Hence, the first two equations read as
\begin{align*}
 & \left(\partial_{0}\left(\begin{array}{cccc}
M_{0,00} & M_{0,01} & 0 & 0\\
M_{0,10} & M_{0,11} & 0 & 0
\end{array}\right)+\left(\begin{array}{cccc}
M_{1,00} & M_{1,01} & M_{1,02} & M_{1,03}\\
M_{1,10} & M_{1,11} & M_{1,12} & M_{1,13}
\end{array}\right)\right.\\
 & \left.\quad+\left(\begin{array}{cccc}
0 & -\partial_{1}N^{*} & C^{\diamond} & 0\\
-N\partial_{1} & 0 & 0 & 0
\end{array}\right)\right)\left(\begin{array}{c}
x_{0}\\
\begin{array}{c}
x_{1}\\
w
\end{array}\\
y
\end{array}\right)=\delta\otimes\left(\begin{array}{c}
\xi_{0}\\
\xi_{1}
\end{array}\right),
\end{align*}
or
\[
\partial_{0}\left(\begin{array}{cc}
M_{0,00} & M_{0,01}\\
M_{0,10} & M_{0,11}
\end{array}\right)\left(\begin{array}{c}
x_{0}\\
x_{1}
\end{array}\right)+\left(\begin{array}{cccc}
M_{1,00} & M_{1,01} & M_{1,02} & M_{1,03}\\
M_{1,10} & M_{1,11} & M_{1,12} & M_{1,13}
\end{array}\right)\left(\begin{array}{c}
x_{0}\\
\begin{array}{c}
x_{1}\\
w
\end{array}\\
y
\end{array}\right)+\left(\begin{array}{c}
-N^{*}x_{1}'\\
-Nx_{0}'
\end{array}\right)=\delta\otimes\left(\begin{array}{c}
\xi_{0}\\
\xi_{1}
\end{array}\right).
\]
Thus, we arrive at the following system
\begin{align*}
 & \partial_{0}\left(\begin{array}{cc}
M_{0,00} & M_{0,01}\\
M_{0,10} & M_{0,11}
\end{array}\right)\left(\begin{array}{c}
x_{0}\\
x_{1}
\end{array}\right)\\
 & \quad+\left(\begin{array}{cccc}
M_{1,00} & M_{1,01} & M_{1,02} & M_{1,03}\\
M_{1,10} & M_{1,11} & M_{1,12} & M_{1,13}
\end{array}\right)\left(\begin{array}{c}
x_{0}\\
\begin{array}{c}
x_{1}\\
w
\end{array}\\
y
\end{array}\right)-\left(\begin{array}{cc}
0 & N^{*}\\
N & 0
\end{array}\right)\partial_{1}\left(\begin{array}{c}
x_{0}\\
x_{1}
\end{array}\right)=\delta\otimes\left(\begin{array}{c}
\xi_{0}\\
\xi_{1}
\end{array}\right)
\end{align*}
In order to reproduce the formal structure of port-Hamiltonian systems,
we are led to assume that $\left(\begin{array}{cc}
M_{0,00} & M_{0,01}\\
M_{0,10} & M_{0,11}
\end{array}\right)=\mathcal{H}^{-1}$ and $\left(\begin{array}{cc}
M_{1,00} & M_{1,01}\\
M_{1,10} & M_{1,11}
\end{array}\right)=-P_{0}.$ Moreover, $\left(\begin{array}{cc}
M_{1,02} & M_{1,03}\\
M_{1,12} & M_{1,13}
\end{array}\right)$ must be assumed to be $0.$ To simplify matters further, we consider
the second two rows of $M_{1}$ to be of the form 
\[
\left(\begin{array}{cccc}
0 & 0 & M_{1,22} & M_{1,23}\\
0 & 0 & M_{1,32} & M_{1,33}
\end{array}\right).
\]
Then the second two rows of system (\ref{eq:port_Ham}) are
\begin{align*}
M_{1,22}w+M_{1,23}y-Cx_{0} & =B_{1}u\\
M_{1,32}w+M_{1,33}y & =B_{2}u.
\end{align*}
Using the above condition that $\left(\begin{array}{c}
w_{1}\\
w_{2}
\end{array}\right)=\left(\begin{array}{c}
x_{1}(b)\\
x_{1}(a)
\end{array}\right),$ we get that 
\begin{align*}
M_{1,22}\left(\begin{array}{c}
x_{1}(b)\\
x_{1}(a)
\end{array}\right)+M_{1,23}y+\left(\begin{array}{c}
Nx_{0}(b)\\
-Nx_{0}(a)
\end{array}\right) & =B_{1}u\\
M_{1,32}\left(\begin{array}{c}
x_{1}(b)\\
x_{1}(a)
\end{array}\right)+M_{1,33}y & =B_{2}u.
\end{align*}
In the spirit of boundary control and boundary observation we have
that the boundary values of $x_{0}$ are expressed as a linear combination
of the output $y$. Thus, there is a linear operator $W\in L(H_{\nu,0}(\mathbb{R},\mathbb{K}^{n}))$
such that $Wy=\chi_{]0,\infty[}(m_{0})\left(\begin{array}{c}
Nx_{0}(b)\\
-Nx_{0}(a)
\end{array}\right)$. Moreover, assuming suitable invertibility properties on the operators
$B_{1},B_{2},M_{1,33}$ and $M_{1,23}$, we may express the above
two equations as a system of two equations of the form: 
\begin{align*}
u & =(B_{1}-(M_{1,23}+W)M_{1,33}^{-1}B_{2})^{-1}\left(M_{1,22}-(M_{1,23}+W)M_{1,33}^{-1}M_{1,32}\right)\left(\begin{array}{c}
x_{1}(b)\\
x_{1}(a)
\end{array}\right),\\
y & =\left(B_{1}B_{2}^{-1}M_{1,33}-(M_{1,23}+W)\right)^{-1}\left(M_{1,22}-B_{1}B_{2}^{-1}M_{1,32}\right)\left(\begin{array}{c}
x_{1}(b)\\
x_{1}(a)
\end{array}\right).
\end{align*}
These equations are the control and the observation equations and
they are of the same form as considered in \cite{JacZwartIsem}. A
similar reasoning is applied in Remark \ref{control_observation_equation},
where a more general situation is considered. 

The discussion of boundary control within the context of port-Hamiltonian
systems becomes accessible due to the Sobolev-embedding theorem yielding
a continuous boundary trace operator and a finite-dimensional boundary
trace space. In higher-dimensional situations the Sobolev-embedding
theorem depends on the geometry of the underlying domain. A continuous
boundary trace operator can only be defined for domains satisfying
some regularity assumptions at the boundary, e.g.\ assuming a Lipschitz-continuous
boundary. We shall approach boundary control systems from a more general
perspective without assuming undue regularity of the boundary. In
order to have the functional analytic notions at hand to replace the
boundary trace space by an appropriate alias that captures the boundary
data, we implement the necessary concepts in the next section.

~

\subsection{Boundary Data Spaces\label{sub:Boundary-Data-Spaces}}

Throughout this section, let $H_{0}$ and $H_{1}$ be Hilbert spaces
and let%
\footnote{The notation $\interior G,\:\interior D$ is chosen as a reminder
of the basic situation taking these as the closure of the classical
operations $\grad$ and $\dive$ defined on $C_{\infty}-$functions
with compact support in an open set $\Omega$ of $\mathbb{R}^{n},$
$n\in\mathbb{N}.$ In other practical cases, these operators can change
role or can be totally different operators such as $\curl.$%
} $\interior G\subseteq H_{0}\oplus H_{1},\:\interior D\subseteq H_{1}\oplus H_{0}$
be two densely defined, closed linear operators, which are assumed
to be formally skew-adjoint linear operators, i.e.
\begin{align*}
\interior D & \subseteq D\coloneqq-\left(\interior G\right)^{*},\\
\interior G & \subseteq G\coloneqq-\left(\interior D\right)^{*}.
\end{align*}

\begin{lem}
We have the orthogonal decompositions
\begin{align}
H_{1}\left(\left|G\right|+\i\right) & =H_{1}\left(\left|\interior G\right|+\i\right)\oplus N\left(1-DG\right),\label{eq:deco1}\\
H_{1}\left(\left|D\right|+\i\right) & =H_{1}\left(\left|\interior D\right|+\i\right)\oplus N\left(1-GD\right).\label{eq:deco2}
\end{align}
\end{lem}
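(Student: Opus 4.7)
The plan is first to identify $H_{1}(|G|+\i)$ with $D(G)$ equipped with the graph inner product, and then to compute the orthogonal complement of the closed subspace $H_{1}(|\interior G|+\i)$ by unraveling the defining relation $-(\interior G)^{*}=D$.

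For the setup, $|G|$ is nonnegative selfadjoint with $D(|G|)=D(G)$ and $||G|x|_{H_{0}}=|Gx|_{H_{1}}$, so
\[
|(|G|+\i)x|_{H_{0}}^{2}=|x|_{H_{0}}^{2}+|Gx|_{H_{1}}^{2}.
\]
Thus $H_{1}(|G|+\i)$ is $D(G)$ endowed with the graph inner product $\langle x,y\rangle_{H_{0}}+\langle Gx,Gy\rangle_{H_{1}}$ (the cross terms vanish by selfadjointness of $|G|$). Since $\interior G$ is closed with $\interior G\subseteq G$, the same reasoning identifies $H_{1}(|\interior G|+\i)$ with $D(\interior G)$ under the same graph inner product, and $D(\interior G)$ is therefore a closed subspace of $H_{1}(|G|+\i)$.

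Next, I would characterize the orthogonal complement of $H_{1}(|\interior G|+\i)$ in $H_{1}(|G|+\i)$. An element $x\in D(G)$ is orthogonal to every $\phi\in D(\interior G)$ if and only if
\[
\langle\interior G\phi,Gx\rangle_{H_{1}}=-\langle\phi,x\rangle_{H_{0}}\quad\text{for all }\phi\in D(\interior G).
\]
By the very definition of the adjoint $(\interior G)^{*}$ together with the relation $(\interior G)^{*}=-D$, this is equivalent to $Gx\in D(D)$ and $DGx=x$, that is, $x\in N(1-DG)$. Conversely, any $x\in N(1-DG)$ automatically belongs to $D(DG)\subseteq D(G)$, hence lies in $H_{1}(|G|+\i)$, and the same calculation verifies orthogonality to $D(\interior G)$. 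Together with the closed-subspace decomposition of a Hilbert space, this establishes (\ref{eq:deco1}).

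Decomposition (\ref{eq:deco2}) follows by the identical argument with the roles of $G$ and $D$ exchanged, now invoking $(\interior D)^{*}=-G$ in place of $(\interior G)^{*}=-D$. The only real obstacle is bookkeeping: one must track the sign conventions produced by the formal skew-adjointness $\interior D\subseteq-(\interior G)^{*}$, and confirm that the abstract inner product on $H_{1}(|G|+\i)$ reduces to the graph form without any surviving cross terms. Once these identifications are in place, the claim is a routine application of the orthogonal-complement theorem for a closed subspace of a Hilbert space.
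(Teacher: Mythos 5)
Your proposal is correct and takes essentially the same route as the paper: both identify the $H_{1}(\left|G\right|+\i)$-inner product with the graph inner product of $G$ and use the adjoint relation $\left(\interior G\right)^{*}=-D$ to read off that the orthogonal complement of $H_{1}(\left|\interior G\right|+\i)$ is exactly $N\left(1-DG\right)$, with the second decomposition obtained by symmetry. Your extra verifications (vanishing of the cross terms, closedness of $D(\interior G)$ in the graph norm, and the converse inclusion $N\left(1-DG\right)\subseteq H_{1}(\left|\interior G\right|+\i)^{\perp}$) are details the paper leaves implicit.
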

\begin{proof}
Let $\phi\in H_{1}\left(\left|\interior G\right|+\i\right)^{\perp}$.
Then for all $\psi\in H_{1}\left(\left|\interior G\right|+\i\right)$
\begin{align*}
0 & =\left\langle \psi|\phi\right\rangle _{H_{1}\left(\left|G\right|+\i\right)}\\
 & =\left\langle \psi|\phi\right\rangle _{H_{0}}+\left\langle \left|G\right|\psi|\left|G\right|\phi\right\rangle _{H_{0}}\\
 & =\left\langle \psi|\phi\right\rangle _{H_{0}}+\left\langle G\psi|G\phi\right\rangle _{H_{1}}\\
 & =\left\langle \psi|\phi\right\rangle _{H_{0}}+\left\langle \interior G\psi|G\phi\right\rangle _{H_{1}}
\end{align*}
We read off that $G\phi\in D\left(\left(\interior G\right)^{*}\right)=D\left(D\right)$
and 
\[
DG\phi=\phi.
\]
The remaining case follows analogously.
\end{proof}
We define%
\footnote{The notation $BD\left(\:\cdot\:\right)$ is supposed to be a reminder
that in applications these spaces will serve as the spaces of boundary
data.%
}
\begin{align*}
BD\left(G\right) & \coloneqq N\left(1-DG\right)\\
BD\left(D\right) & \coloneqq N\left(1-GD\right)
\end{align*}
and obtain 
\begin{align*}
G\left[BD\left(G\right)\right] & \subseteq BD\left(D\right),\\
D\left[BD\left(D\right)\right] & \subseteq BD\left(G\right).
\end{align*}

For later purposes we also introduce the canonical projectors $\pi_{BD\left(G\right)}:H_{1}\left(\left|G\right|+\i\right)\to BD\left(G\right)$
and $\pi_{BD\left(D\right)}:H_{1}\left(\left|D\right|+\i\right)\to BD\left(D\right)$
onto the component spaces $BD\left(G\right),\: BD\left(D\right)$
according to the direct sum decompositions (\ref{eq:deco1}), (\ref{eq:deco2}),
respectively. The orthogonal projectors $P_{BD\left(G\right)}:H_{1}\left(\left|G\right|+\i\right)\to H_{1}\left(\left|G\right|+\i\right)$,
$P_{BD\left(D\right)}:H_{1}\left(\left|G\right|+\i\right)\to H_{1}\left(\left|G\right|+\i\right)$
associated with (\ref{eq:deco1}) and (\ref{eq:deco2}) can now be
expressed as
\[
P_{BD\left(G\right)}=\pi_{BD\left(G\right)}^{*}\pi_{BD\left(G\right)},\; P_{BD\left(D\right)}=\pi_{BD\left(D\right)}^{*}\pi_{BD\left(D\right)}.
\]
Note that $\pi_{BD\left(G\right)}^{*},\:\pi_{BD\left(D\right)}^{*}$
are the canonical embeddings of $BD\left(G\right)$ in $H_{1}\left(\left|G\right|+\i\right)$
and of $BD\left(D\right)$ in $H_{1}\left(\left|D\right|+\i\right),$
respectively.

Thus, on $BD\left(D\right)$ we may define the operator $\overset{\bullet}{D}$
by%
\footnote{These operators are an abstract version of the Dirichlet-to-Neumann
operator since the ``boundary data'' space for $G$ is transformed
into the ``boundary data'' space for $D$. Indeed, if $u$ is a
solution of the inhomogeneous ``Dirichlet boundary value problem''
\begin{align*}
\left(1-DG\right)u & =0\\
u-g & \in D\left(\interior G\right)
\end{align*}
for given data $g\in BD\left(G\right)$ then also
\[
\left(1-D\interior G\right)\left(u-g\right)=0
\]
implying
\[
u=g.
\]
This implies
\[
\overset{\bullet}{G}u=\overset{\bullet}{G}g
\]
and $u$ is therefore also the solution of the inhomogeneous ``Neumann
boundary value problem''
\begin{align*}
\left(1-DG\right)u & =0\\
Gu-\overset{\bullet}{G}g & \in D\left(\interior D\right)
\end{align*}
and vice versa.%
}
\begin{align*}
\overset{\bullet}{D}:BD\left(D\right) & \to BD\left(G\right)\\
\phi & \mapsto D\phi
\end{align*}
and the operator $\overset{\bullet}{G}$ by
\begin{align*}
\overset{\bullet}{G}:BD\left(G\right) & \to BD\left(D\right)\\
\phi & \mapsto G\phi.
\end{align*}
The operators $\overset{\bullet}{D}$ and $\overset{\bullet}{G}$
enjoy the following surprising property.
\begin{thm}
\label{thm:unitary_D_G}We have that%
\footnote{Note, however, that in contrast we have
\[
\left(G\right)^{*}=-\interior D
\]
 in $H_{0}\left(\left|\interior D\right|+\i\right)\oplus H_{0}\left(\left|G\right|+\i\right).$%
}
\[
\left(\overset{\bullet}{G}\right)^{*}=\overset{\bullet}{D}=\left(\overset{\bullet}{G}\right)^{-1}.
\]
In particular, $\overset{\bullet}{G}$ and $\overset{\bullet}{D}$
are unitary.\end{thm}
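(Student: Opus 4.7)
The plan is to verify the two equalities $(\overset{\bullet}{G})^{-1}=\overset{\bullet}{D}$ and $(\overset{\bullet}{G})^{*}=\overset{\bullet}{D}$ separately, exploiting in each case only the defining relations $DG\phi=\phi$ for $\phi\in BD(G)$ and $GD\psi=\psi$ for $\psi\in BD(D)$, together with the inclusions $G[BD(G)]\subseteq BD(D)$ and $D[BD(D)]\subseteq BD(G)$ already established before the theorem. Since the operators are defined on Hilbert spaces of boundary data carrying the graph norms of $G$ and $D$ respectively, any computation will naturally involve those graph inner products.

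First, I would treat the invertibility. For $\phi\in BD(G)$ the relation $DG\phi=\phi$ immediately yields $\overset{\bullet}{D}\overset{\bullet}{G}\phi=\phi$, and for $\psi\in BD(D)$ the relation $GD\psi=\psi$ yields $\overset{\bullet}{G}\overset{\bullet}{D}\psi=\psi$. So $\overset{\bullet}{G}$ and $\overset{\bullet}{D}$ are mutually inverse as set-theoretic maps between $BD(G)$ and $BD(D)$. This step is essentially a bookkeeping unwinding of the definitions.

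Next, I would compute both sides of the adjoint relation. For $\phi\in BD(G)$ and $\psi\in BD(D)$ the graph inner products are
\[
\langle \overset{\bullet}{G}\phi\,|\,\psi\rangle_{H_{1}(|D|+\i)}=\langle G\phi\,|\,\psi\rangle_{H_{1}}+\langle DG\phi\,|\,D\psi\rangle_{H_{0}}=\langle G\phi\,|\,\psi\rangle_{H_{1}}+\langle \phi\,|\,D\psi\rangle_{H_{0}}
\]
using $DG\phi=\phi$, and on the other side
\[
\langle \phi\,|\,\overset{\bullet}{D}\psi\rangle_{H_{1}(|G|+\i)}=\langle \phi\,|\,D\psi\rangle_{H_{0}}+\langle G\phi\,|\,GD\psi\rangle_{H_{1}}=\langle \phi\,|\,D\psi\rangle_{H_{0}}+\langle G\phi\,|\,\psi\rangle_{H_{1}}
\]
using $GD\psi=\psi$. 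The two expressions are identical, which shows $\overset{\bullet}{D}\subseteq(\overset{\bullet}{G})^{*}$. Since $\overset{\bullet}{D}$ is everywhere defined on $BD(D)$ and $\overset{\bullet}{G}$ is everywhere defined on $BD(G)$, both are bounded between these Hilbert spaces (being restrictions in the graph-norm), and the inclusion $\overset{\bullet}{D}\subseteq(\overset{\bullet}{G})^{*}$ combined with the already established surjectivity forces equality of the adjoint on the whole space.

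Combining the two results, $\overset{\bullet}{G}$ has an everywhere defined inverse which coincides with its adjoint, hence $\overset{\bullet}{G}$ is unitary, and by symmetry so is $\overset{\bullet}{D}$. I do not anticipate a serious obstacle: the only subtlety is to keep track of which graph inner product is being used on each side of the adjoint identity, and to make sure the $G$-term in one inner product matches the $D$-term after applying the kernel identity in the other. No approximation or density argument is needed because the relations $DG\phi=\phi$ and $GD\psi=\psi$ hold pointwise on the respective boundary data spaces.
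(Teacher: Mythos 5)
Your proposal is correct and follows essentially the same route as the paper: both first read off $\overset{\bullet}{D}=(\overset{\bullet}{G})^{-1}$ from the kernel relations $DG\phi=\phi$, $GD\psi=\psi$, and then establish $(\overset{\bullet}{G})^{*}=\overset{\bullet}{D}$ by the identical graph-inner-product computation, merely written with the sums $\langle\cdot|\cdot\rangle_{H_{1}}+\langle D\cdot|D\cdot\rangle_{H_{0}}$ unpacked where the paper keeps the $H_{0}(|D|+\i)$, $H_{0}(|G|+\i)$ notation. One small polish: rather than appealing to ``restrictions in the graph-norm'' for boundedness, note that your own displayed identity with $\psi=\overset{\bullet}{G}\phi$ gives $|\overset{\bullet}{G}\phi|_{BD(D)}^{2}=|G\phi|_{H_{1}}^{2}+|\phi|_{H_{0}}^{2}=|\phi|_{BD(G)}^{2}$, so $\overset{\bullet}{G}$ is isometric and the adjoint identity then holds everywhere without any inclusion argument.
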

\begin{proof}
Obviously is $\overset{\bullet}{D}\overset{\bullet}{G}$ the identity
on $BD\left(G\right)$ and $\overset{\bullet}{G}\overset{\bullet}{D}$
the identity on $BD\left(D\right).$ Consequently, 
\[
\overset{\bullet}{D}=\left(\overset{\bullet}{G}\right)^{-1}.
\]
Moreover, for $\phi\in BD\left(G\right)$ and $\psi\in BD\left(D\right)$
\begin{align*}
\left\langle \left.\overset{\bullet}{G}\phi\right|\psi\right\rangle _{BD(D)} & \coloneqq\left\langle \left.\overset{\bullet}{G}\phi\right|\psi\right\rangle _{H_{1}(|D|+\i)}=\left\langle \left.\overset{\bullet}{G}\phi\right|\psi\right\rangle _{H_{0}(|D|+\i)}+\left\langle \left.\overset{\bullet}{D}\overset{\bullet}{G}\phi\right|\overset{\bullet}{D}\psi\right\rangle _{H_{0}(|G|+\i)}\\
 & =\left\langle \left.\overset{\bullet}{G}\phi\right|\overset{\bullet}{G}\overset{\bullet}{D}\psi\right\rangle _{H_{0}(|D|+\i)}+\left\langle \phi\left|\overset{\bullet}{D}\psi\right.\right\rangle _{H_{0}(|G|+\i)}\\
 & =\left\langle \phi\left|\overset{\bullet}{D}\psi\right.\right\rangle _{H_{1}(|G|+\i)}\eqqcolon\left\langle \phi\left|\overset{\bullet}{D}\psi\right.\right\rangle _{BD(G)}
\end{align*}
leading to 
\[
\left(\overset{\bullet}{G}\right)^{*}=\overset{\bullet}{D}
\]
in $BD\left(D\right)\oplus BD\left(G\right).$\end{proof}
\begin{example}
\label{As-an-application} As an application let us calculate the
dual mapping $\pi_{BD\left(G\right)}^{\diamond}$ of 
\[
\pi_{BD\left(G\right)}:H_{1}\left(\left|G\right|+\i\right)\to BD\left(G\right)
\]
according to the Gelfand triplet $H_{1}(|G|+\i)\subseteq H_{0}(|G|+\i)\subseteq H_{-1}(|G|+\i)$%
\footnote{Note that the Riesz-mapping $R_{H_{1}(|G|+\i)}:H_{-1}(|G|+\i)\to H_{1}(|G|+\i)$
is given by $R_{H_{1}(|G|+\i)}\phi=(1+|G|^{2})^{-1}\phi=(1+G^{*}G)^{-1}\phi=(1-\interior DG)^{-1}\phi$.%
}, which would be a mapping from $BD\left(G\right)$ (identified with
$BD\left(G\right)^{*}$) into $H_{-1}\left(\left|G\right|+\i\right)$.
We find
\begin{align*}
\left(\pi_{BD\left(G\right)}\right)^{\diamond} & =R_{H_{1}\left(\left|G\right|+\i\right)}^{*}\pi_{BD\left(G\right)}^{*}\\
 & =\left(\left|G\right|^{2}+1\right)\pi_{BD\left(G\right)}^{*}\\
 & =\pi_{BD\left(G\right)}^{*}-\interior DG\pi_{BD\left(G\right)}^{*}\\
 & =\pi_{BD\left(G\right)}^{*}-\interior D\pi_{BD\left(D\right)}^{*}\overset{\bullet}{G}.
\end{align*}

\end{example}
\begin{rem}
In the literature, in order to discuss boundary control systems in
an operator-theoretic framework, the concept of boundary triples is
used, see e.g.~\cite{Malinen_Staffans_JDE,Behrndt_bdy_rel,Behrndt_Kreusler_bdy_rel,Derkach_bdy_relations},
we also refer to \cite{BDY_systems,SWIPs}, where in \cite{BDY_systems}
a unified perspective is given. A boundary triple is a symmetric operator
$S$ defined in a Hilbert space $H$ and two continuous linear operators
$\Gamma_{0},\Gamma_{1}\colon H_{1}(\left|S^{*}\right|+\i)\to K,$
mapping onto a Hilbert space $K$. Moreover, for all $x,y\in D(S^{*})$
the following equality should be satisfied
\[
\langle S^{*}x|y\rangle_{H}-\langle x|S^{*}y\rangle_{H}=\langle\Gamma_{0}x|\Gamma_{1}y\rangle_{K}-\langle\Gamma_{1}x|\Gamma_{0}y\rangle_{K}.
\]
In the literature one finds the notation $(K,\Gamma_{0},\Gamma_{1})$,
which explains the name. In the situation of this section we also
have a boundary triple: Setting 
\[
S=-\i\left(\begin{array}{cc}
0 & \interior D\\
\interior G & 0
\end{array}\right),\quad K=BD(G),\quad\Gamma_{0}=\left(\begin{array}{cc}
\pi_{BD(G)} & 0\end{array}\right),\quad\Gamma_{1}=\left(\begin{array}{cc}
0 & \i\overset{\bullet}{D}\pi_{BD(D)}\end{array}\right),
\]
we get a boundary triple. Indeed, let $(u,v),(x,y)\in H_{1}(\left|S^{*}\right|+\i)=H_{1}(\left|G\right|+\i)\oplus H_{1}(\left|D\right|+\i)$.
Denoting $P_{\interior D}\coloneqq1-P_{BD(D)}$ and $P_{\interior G}\coloneqq1-P_{BD(G)}$,
we compute
\begin{align*}
 & -\left\langle S^{*}\left(\begin{array}{c}
u\\
v
\end{array}\right)\left|\left(\begin{array}{c}
x\\
y
\end{array}\right)\right.\right\rangle _{H_{0}(|S^{\ast}|+\i)}+\left\langle \left(\begin{array}{c}
u\\
v
\end{array}\right)\left|S^{*}\left(\begin{array}{c}
x\\
y
\end{array}\right)\right.\right\rangle _{H_{0}(|S^{\ast}|+\i)}\\
 & =\i\left(\langle Dv|x\rangle_{H_{0}(|G|+\i)}+\langle Gu|y\rangle_{H_{0}(|D|+\i)}+\langle u|Dy\rangle_{H_{0}(|G|+\i)}+\langle v|Gx\rangle_{H_{0}(|D|+\i)}\right)\\
 & =\i\left(\langle DP_{\interior D}v+DP_{BD(D)}v|x\rangle_{H_{0}(|G|+\i)}+\langle GP_{\interior G}u+GP_{BD(G)}u|y\rangle_{H_{0}(|D|+\i)}\right.\\
 & \quad\left.+\langle P_{\interior G}u+P_{BD(G)}u|Dy\rangle_{H_{0}(|G|+\i)}+\langle P_{\interior D}v+P_{BD(D)}v|Gx\rangle_{H_{0}(|D|+\i)}\right)\\
 & =\i\left(\langle DP_{BD(D)}v|x\rangle_{H_{0}(|G|+\i)}+\langle GP_{BD(G)}u|y\rangle_{H_{0}(|D|+\i)}\right.\\
 & \quad\left.+\langle P_{BD(G)}u|Dy\rangle_{H_{0}(|G|+\i)}+\langle P_{BD(D)}v|Gx\rangle_{H_{0}(|D|+\i)}\right)\\
 & =\i\left(\langle DP_{BD(D)}v|x\rangle_{H_{0}(|G|+\i)}+\langle GP_{BD(G)}u|y\rangle_{H_{0}(|D|+\i)}\right.\\
 & \quad\left.+\langle DGP_{BD(G)}u|Dy\rangle_{H_{0}(|G|+\i)}+\langle GDP_{BD(D)}v|Gx\rangle_{H_{0}(|D|+\i)}\right)\\
 & =\i\left(\langle DP_{BD(D)}v|x\rangle_{H_{1}(|G|+\i)}+\langle GP_{BD(G)}u|y\rangle_{H_{1}(|D|+\i)}\right)\\
 & =\i\left(\langle\overset{\bullet}{D}\pi_{BD(D)}v|\pi_{BD(G)}x\rangle_{BD(G)}+\langle\overset{\bullet}{G}\pi_{BD(G)}u|\pi_{BD(D)}y\rangle_{BD(D)}\right)\\
 & =\i\left(\langle\overset{\bullet}{D}\pi_{BD(D)}v|\pi_{BD(G)}x\rangle_{BD(G)}+\langle\pi_{BD(G)}u|\overset{\bullet}{D}\pi_{BD(D)}y\rangle_{BD(G)}\right)\\
 & =-\langle\i\overset{\bullet}{D}\pi_{BD(D)}v|\pi_{BD(G)}x\rangle_{BD(G)}+\langle\pi_{BD(G)}u|\i\overset{\bullet}{D}\pi_{BD(D)}y\rangle_{BD(G)}.
\end{align*}
 
\end{rem}

\subsection{Control Systems with Boundary Control and Boundary Observation\label{sub:Boundary-Control-and}}

We apply our previous findings in this section to model problems with
boundary control and boundary observation in more complex situations.
For this purpose we consider abstract linear control systems $\mathcal{C}_{M_{0},M_{1},F,B}$
where the operator $F$ is given in the following form 
\begin{equation}
F\coloneqq\left(\begin{array}{c}
-G\\
C
\end{array}\right):H_{1}(|G|+\i)\subseteq H_{0}(|G|+\i)\to H_{0}(|\interior D|+\i)\oplus V,\label{eq:op_F}
\end{equation}

with $C\in L(H_{1}(|G|+\i),V)$ for some Hilbert space $V$ and $G,D$
are as in Subsection \ref{sub:Boundary-Data-Spaces}. As a variant
of \cite[Lemma 5.1]{PiTroWau_2012} we compute the adjoint of $F$
explicitly under the additional constraint that $G$ is boundedly
invertible.
\begin{thm}
\label{thm:adjoint_F}Let $F$ be given as above and let $G$ be boundedly
invertible. Then 
\begin{align*}
F^{\ast}:D(F^{\ast})\subseteq H_{0}(|\interior D|+\i)\oplus V & \to H_{0}(|G|+\i)\\
(\zeta,w) & \mapsto\interior D\zeta+C^{\diamond}w,
\end{align*}
where $C^{\diamond}$ is the dual operator of $C$ with respect to
the Gelfand-triplet $H_{1}(|G|+\i)\subseteq H_{0}(|G|+\i)\subseteq H_{-1}(|G|+\i)$
and 
\[
D(F^{\ast})=\{(\zeta,w)\in H_{0}(|\interior D|+\i)\oplus V\,|\,\interior D\zeta+C^{\diamond}w\in H_{0}(|G|+\i)\}.
\]
\end{thm}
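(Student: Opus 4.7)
The plan is to compute $F^{*}$ directly from the defining relation of the adjoint. Starting from the pairing
\[
\langle F\phi\,|\,(\zeta,w)\rangle = -\langle G\phi\,|\,\zeta\rangle_{H_{1}} + \langle C\phi\,|\,w\rangle_{V}
\]
for $\phi\in D(F)=H_{1}(|G|+\i)$ and $(\zeta,w)\in H_{0}(|\interior D|+\i)\oplus V$, I would transpose each of the two contributions separately so as to rewrite the pairing in the form $\langle\phi\,|\,\eta\rangle$ for a suitable $\eta$, and then read off both the action and the domain of $F^{*}$.

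For the first contribution, the defining relation $G=-(\interior D)^{*}$ gives $-\langle G\phi\,|\,\zeta\rangle_{H_{1}}=\langle\phi\,|\,\interior D\,\zeta\rangle_{H_{0}}$ whenever $\zeta\in D(\interior D)$. For a general $\zeta\in H_{0}(|\interior D|+\i)$ I invoke the Sobolev-chain extension recalled in Section~\ref{sec:Functional-analytic-framework}, which realizes $\interior D$ as a unitary operator $\interior D\colon H_{0}(|\interior D|+\i)\to H_{-1}(|(\interior D)^{*}|+\i)=H_{-1}(|G|+\i)$ and lets me interpret the pairing on the right through the dual pairing of $H_{1}(|G|+\i)$ with $H_{-1}(|G|+\i)$, which canonically extends the $H_{0}$-inner product. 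For the second contribution, the definition of the dual operator $C^{\diamond}\colon V\to H_{-1}(|G|+\i)$ with respect to the Gelfand triplet $H_{1}(|G|+\i)\subseteq H_{0}\subseteq H_{-1}(|G|+\i)$ gives $\langle C\phi\,|\,w\rangle_{V}=\langle\phi\,|\,C^{\diamond}w\rangle$ in the same dual pairing.

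Summing the two contributions produces
\[
\langle F\phi\,|\,(\zeta,w)\rangle = \langle\phi\,|\,\interior D\,\zeta + C^{\diamond}w\rangle_{H_{1}(|G|+\i),\,H_{-1}(|G|+\i)}.
\]
By density of $H_{1}(|G|+\i)$ in $H_{0}$, the functional $\phi\mapsto\langle F\phi\,|\,(\zeta,w)\rangle$ extends boundedly to $H_{0}=H_{0}(|G|+\i)$ if and only if $\interior D\,\zeta+C^{\diamond}w$ already lies in the middle space $H_{0}\subseteq H_{-1}(|G|+\i)$; in that case the extension equals $\langle\phi\,|\,\interior D\,\zeta+C^{\diamond}w\rangle_{H_{0}}$. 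This is precisely the claimed description of both $F^{*}$ and $D(F^{*})$.

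The main obstacle is not a single calculation but the careful bookkeeping of the Sobolev-chain identifications: one must verify that the dual pairing of $H_{1}(|G|+\i)$ with $H_{-1}(|G|+\i)$ which receives both $\interior D\,\zeta$ (via the Sobolev chain of $|\interior D|+\i$) and $C^{\diamond}w$ (via the Gelfand triplet of $|G|+\i$) is one and the same pairing, and that it restricts to the $H_{0}$-inner product on the overlap. The standing hypothesis that $G$ is boundedly invertible is what makes these identifications smooth: it ensures that on $H_{1}(|G|+\i)$ the norm $|G\cdot|_{H_{1}}$ is equivalent to the graph norm and that $\interior D$ admits a unique continuous extension as stated, so that the final identity $F^{*}(\zeta,w)=\interior D\,\zeta+C^{\diamond}w$ is unambiguous.
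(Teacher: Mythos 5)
Your proof is correct, but it takes a genuinely different route from the paper's. The paper argues indirectly: it introduces the candidate operator $K\colon(\zeta,w)\mapsto\interior D\zeta+C^{\diamond}w$ on the asserted domain, checks that $K$ is densely defined and closed, proves $K^{*}=F$ by two inclusions, and concludes $F^{*}=K^{**}=K$. The decisive technical ingredient there is the solvability of $\interior D\zeta=-C^{\diamond}w$ in $H_{0}(|\interior D|+\i)$ for arbitrary $w\in V$ (quoted from \cite[Theorem 2.1.4]{TroWau_2012}); this is where the bounded invertibility of $G$ is genuinely consumed, both to produce enough test elements $(\zeta,w)\in D(K)$ with $K(\zeta,w)=0$ in the inclusion $K^{*}\subseteq F$ and to see that $D(K)$ is dense. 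You instead compute $F^{*}$ head-on: you rewrite $\langle F\phi|(\zeta,w)\rangle$ as the single dual pairing $\langle\phi|\interior D\zeta+C^{\diamond}w\rangle$ by means of the Sobolev-chain extension $\interior D\colon H_{0}(|\interior D|+\i)\to H_{-1}(|G|+\i)$ (using $|(\interior D)^{*}|=|G|$, since $(\interior D)^{*}=-G$) together with the definition of $C^{\diamond}$, and then characterize $D(F^{*})$ through Riesz representation in $H_{0}(|G|+\i)$ and the nondegeneracy of the duality between $H_{1}(|G|+\i)$ and $H_{-1}(|G|+\i)$; this last equivalence (``the functional is $H_{0}$-bounded iff the element lies in $H_{0}$'') is exactly right and is the point where your approach replaces the paper's solvability lemma. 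Two remarks on attribution rather than correctness: the chain extension of $\interior D$ is in general only bounded, not unitary as you state, but your argument uses nothing beyond continuity; and the two uses you ascribe to the hypothesis that $G$ is boundedly invertible (norm equivalence on $H_{1}(|G|+\i)$, existence of the extension) are in fact available for any densely defined closed operator by the Sobolev-chain remarks of Section \ref{sec:Functional-analytic-framework}, so your proof does not essentially use that hypothesis at all. This makes your argument more elementary and slightly more general than the paper's; in the paper the invertibility is really needed later anyway, e.g.\ in Remark \ref{control_observation_equation}, where membership in $D(F^{*})$ is rewritten as $\pi_{BD(D)}(\zeta+\interior D^{-1}C^{\diamond}w)=0$. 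What the paper's detour through $K$ buys is the complementary identity $K^{*}=F$, exhibiting $F$ and the asserted operator as mutually adjoint closed operators; your approach delivers exactly the theorem's statement with less machinery.
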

\begin{proof}
We define
\begin{align*}
K:D(K)\subseteq H_{0}(|\interior D|+\i)\oplus V & \to H_{0}(|G|+\i)\\
(\zeta,w) & \mapsto\interior D\zeta+C^{\diamond}w,
\end{align*}
 with $D(K)\coloneqq\{(\zeta,w)\in H_{0}(|\interior D|+\i)\oplus V\,|\,\interior D\zeta+C^{\diamond}w\in H_{0}(|G|+\i)\}.$
From 
\[
\left(\left(\begin{array}{cc}
\interior D & C^{\ast}\end{array}\right):H_{1}(|\interior D|+\i)\oplus V\subseteq H_{0}(|\interior D|+\i)\oplus V\to H_{0}(|G|+\i)\right)\subseteq K,
\]
we get that $K$ is densely defined. Furthermore $K$ is closed. Thus,
it suffices to prove $K^{\ast}=F$. Let $v\in D(K^{\ast}).$ Then
there exists $\left(\begin{array}{c}
f\\
g
\end{array}\right)\in H_{0}(|\interior D|+\i)\oplus V$ such that for all $\left(\begin{array}{c}
\zeta\\
w
\end{array}\right)\in D(K)$ we have 
\[
\left\langle \left.K\left(\begin{array}{c}
\zeta\\
w
\end{array}\right)\right|v\right\rangle _{H_{0}(|G|+\i)}=\left\langle \left.\left(\begin{array}{c}
\zeta\\
w
\end{array}\right)\right|\left(\begin{array}{c}
f\\
g
\end{array}\right)\right\rangle _{H_{0}(|\interior D|+\i)\oplus V}.
\]
Choosing $w=0$ and $\zeta\in H_{1}(|\interior D|+\i)$ we get 
\[
\langle\interior D\zeta|v\rangle_{H_{0}(|G|+\i)}=\langle\zeta|f\rangle_{H_{0}(|\interior D|+\i)},
\]
yielding $v\in H_{1}(|G|+\i)$ and $f=-Gv.$ Let now $w\in V$ be
arbitrarily chosen. Like in \cite[Theorem 2.1.4]{TroWau_2012} we
find an element $\zeta\in H_{0}(|\interior D|+\i)$ such that $\interior D\zeta=-C^{\diamond}w.$
For this choice of $\zeta$ we get $(\zeta,w)\in D(K)$ with $K\left(\begin{array}{c}
\zeta\\
w
\end{array}\right)=0$ and thus we compute
\begin{align*}
0 & =\left\langle \left.\left(\begin{array}{c}
\zeta\\
w
\end{array}\right)\right|\left(\begin{array}{c}
-Gv\\
g
\end{array}\right)\right\rangle _{H_{0}(|\interior D|+\i)\oplus V}\\
 & =\langle\zeta|-Gv\rangle_{H_{0}(|\interior D|+\i)}+\langle w|g\rangle_{V}\\
 & =\langle\interior D\zeta|v\rangle_{H_{0}(|G|+\i)}+\langle w|g\rangle_{V}\\
 & =\langle-C^{\diamond}w|v\rangle_{H_{0}(|G|+\i)}+\langle w|g\rangle_{V}\\
 & =\langle w|-Cv+g\rangle_{V}.
\end{align*}
This shows $g=Cv$ and hence $K^{\ast}\subseteq F.$ Let now $v\in D(F)$
and $\left(\begin{array}{c}
\zeta\\
w
\end{array}\right)\in D(K).$ Then 
\begin{align*}
\left\langle \left.K\left(\begin{array}{c}
\zeta\\
w
\end{array}\right)\right|v\right\rangle _{H_{0}(|G|+\i)} & =\langle\interior D\zeta+C^{\diamond}w|v\rangle_{H_{0}(|G|+\i)}\\
 & =\langle\interior D\zeta|v\rangle_{H_{0}(|G|+\i)}+\langle C^{\diamond}w|v\rangle_{H_{0}(|G|+\i)}\\
 & =\langle\zeta|-Gv\rangle_{H_{0}(|\interior D|+\i)}+\langle w|Cv\rangle_{V},
\end{align*}
which shows $F\subseteq K^{\ast}.$ \end{proof}
\begin{rem}
\label{control_observation_equation}With this choice of $F$ we can
model systems with boundary observation and boundary control in the
following way: Let $M_{0}$ and $M_{1}$ be of the following form
\[
M_{0}=\left(\begin{array}{cccc}
M_{0,00} & M_{0,01} & 0 & 0\\
M_{0,10} & M_{0,11} & 0 & 0\\
0 & 0 & 0 & 0\\
0 & 0 & 0 & 0
\end{array}\right),\: M_{1}=\left(\begin{array}{cccc}
M_{1,00} & M_{1,01} & M_{1,02} & M_{1,03}\\
M_{1,10} & M_{1,11} & M_{1,12} & M_{1,13}\\
M_{1,20} & M_{1,21} & M_{1,22} & M_{1,23}\\
M_{1,30} & M_{1,31} & M_{1,32} & M_{1,33}
\end{array}\right)
\]
for suitable bounded linear operator $M_{i,jk}$ such that $M_{0}$
is selfadjoint and $\nu M_{0}+\Re M_{1}$ is uniformly strictly positive
definite for all $\nu\in]0,\infty[$ sufficiently large. Consider
the abstract linear control system
\begin{equation}
\left(\partial_{0}M_{0}+M_{1}+\left(\begin{array}{ccc}
0 & -F^{\ast} & 0\\
F & 0 & 0\\
0 & 0 & 0
\end{array}\right)\right)\left(\begin{array}{c}
v\\
\left(\begin{array}{c}
\zeta\\
w
\end{array}\right)\\
y
\end{array}\right)=\delta\otimes M_{0}\left(\begin{array}{c}
v_{0}\\
\left(\begin{array}{c}
\zeta_{0}\\
w_{0}
\end{array}\right)\\
y_{0}
\end{array}\right)+\left(\begin{array}{c}
0\\
\left(\begin{array}{c}
0\\
B_{1}
\end{array}\right)\\
B_{2}
\end{array}\right)u,\label{eq:model_bdy_ob_con}
\end{equation}
where $F$ is chosen as in (\ref{eq:op_F}) and $B_{1}\in L(U,V),\, B_{2}\in L(U,Y).$
We characterize the domain of $F^{\ast}.$ By Theorem \ref{thm:adjoint_F}
a pair $\left(\zeta,w\right)$ belongs to $D(F^{\ast})$ if and only
if $\interior D\zeta+C^{\diamond}w\in H_{0}(|G|+\i).$ Using the invertibility
of $\interior D$ on the related Sobolev chains this is equivalent
to 
\[
\zeta+\interior D^{-1}C^{\diamond}w\in H_{1}(|\interior D|+\i).
\]
Hence, using the results on boundary data spaces this reads as 
\begin{equation}
\pi_{BD(D)}(\zeta+\interior D^{-1}C^{\diamond}w)=0.\label{eq:bd}
\end{equation}
This means that $w$ prescribes the boundary data of $\zeta.$ We
read off the last two lines of equation (\ref{eq:model_bdy_ob_con})
and get 
\begin{align*}
M_{1,20}v+M_{1,21}\zeta+M_{1,22}w+M_{1,23}y+Cv & =B_{1}u\\
M_{1,30}v+M_{1,31}\zeta+M_{1,32}w+M_{1,33}y & =B_{2}u.
\end{align*}
Since the operator matrix $\left(\begin{array}{cc}
M_{1,22} & M_{1,23}\\
M_{1,32} & M_{1,33}
\end{array}\right)\in L(V\oplus Y,V\oplus Y)$ is boundedly invertible by the assumption, we get that 
\[
\left(\begin{array}{c}
w\\
y
\end{array}\right)=\left(\begin{array}{cc}
M_{1,22} & M_{1,23}\\
M_{1,32} & M_{1,33}
\end{array}\right)^{-1}\left(\begin{array}{c}
B_{1}u-(M_{1,20}+C)v-M_{1,21}\zeta\\
B_{2}u-M_{1,30}v-M_{1,31}\zeta
\end{array}\right).
\]
Thus $w$ can be expressed by $v,u$ and $\zeta$. If we plug this
expression for $w$ into equality (\ref{eq:bd}) we obtain a boundary
control equation. Likewise we may assume that the operator matrix
$\left(\begin{array}{cc}
-M_{1,22} & B_{1}\\
-M_{1,32} & B_{2}
\end{array}\right)\in L(V\oplus U,V\oplus Y)$ is boundedly invertible and hence we get that 
\[
\left(\begin{array}{c}
w\\
u
\end{array}\right)=\left(\begin{array}{cc}
-M_{1,22} & B_{1}\\
-M_{1,32} & B_{2}
\end{array}\right)^{-1}\left(\begin{array}{c}
M_{1,23}y+(M_{1,20}+C)v+M_{1,21}\zeta\\
M_{1,33}y+M_{1,30}v+M_{1,31}\zeta
\end{array}\right).
\]
This yields an expression of $w$ in terms of $y,v$ and $\zeta$
and hence (\ref{eq:bd}) becomes a boundary observation equation.\end{rem}
\begin{example}
\label{We-discuss-a}We discuss a possible choice for the observation
space, which will come in handy when we consider the wave equation
with boundary control and observation in the next section. This particular
choice for the control and observation space can be interpreted as
abstract implementation of $L^{2}\left(\Gamma\right)$ of the boundary
$\Gamma$ of the underlying region. To this end, assume that we are
given a continuous linear operator $N:BD(G)\to BD(D)$ satisfying
\[
\left\langle \left(\stackrel{\bullet}{D}N+N^{\ast}\stackrel{\bullet}{G}\right)\phi|\phi\right\rangle _{BD(G)}>0\qquad\left(\phi\in BD(G)\setminus\{0\}\right).
\]
Consider the following sesqui-linear form on $BD(G):$
\[
\langle\cdot|\cdot\rangle_{U}\colon BD(G)\times BD(G)\ni(f,g)\mapsto\frac{1}{2}\langle Nf|\stackrel{\bullet}{G}g\rangle_{BD(D)}+\frac{1}{2}\langle\stackrel{\bullet}{G}f|Ng\rangle_{BD(D)}.
\]
For $f\in BD(G)\setminus\{0\}$, we get 
\[
\frac{1}{2}\langle Nf|\stackrel{\bullet}{G}f\rangle_{BD(G)}+\frac{1}{2}\langle\stackrel{\bullet}{G}f|Nf\rangle_{BD(G)}=\frac{1}{2}\left\langle \left.\left(\stackrel{\bullet}{D}N+N^{\ast}\stackrel{\bullet}{G}\right)f\right|f\right\rangle _{BD(G)}>0.
\]
Hence, $\langle\cdot|\cdot\rangle_{U}$ is an inner product on $BD(G)$.
We denote by $U$ the completion of $BD(G)$ with respect to the norm
induced by $\langle\cdot|\cdot\rangle_{U}$. Then $U$ is a Hilbert
space and 
\begin{align*}
j:BD(G) & \to U\\
f & \mapsto f
\end{align*}
is a dense and continuous embedding. We compute $j^{*}$. Let $f\in BD(G)$
and $g\in BD(G)\subseteq U.$ Then 
\begin{eqnarray*}
\langle j^{\ast}g|f\rangle_{BD(G)} & = & \langle g|jf\rangle_{U}\\
 & = & \frac{1}{2}\langle Ng|\stackrel{\bullet}{G}f\rangle_{BD(D)}+\frac{1}{2}\langle\stackrel{\bullet}{G}g|Nf\rangle_{BD(D)}\\
 & = & \frac{1}{2}\langle Ng|\stackrel{\bullet}{G}f\rangle_{BD(D)}+\frac{1}{2}\langle N^{\ast}\stackrel{\bullet}{G}g|f\rangle_{BD(G)}\\
 & = & \frac{1}{2}\langle Ng|\stackrel{\bullet}{G}f\rangle_{H_{0}(|D|+\i)}+\frac{1}{2}\langle\stackrel{\bullet}{D}Ng|f\rangle_{H_{0}(|G|+\i)}\\
 &  & +\frac{1}{2}\langle N^{\ast}\stackrel{\bullet}{G}g|f\rangle_{H_{0}(|G|+\i)}+\frac{1}{2}\langle\stackrel{\bullet}{G}N^{\ast}\stackrel{\bullet}{G}g|\stackrel{\bullet}{G}f\rangle_{H_{0}(|D|+\i)}\\
 & = & \frac{1}{2}\langle(D-\mathring{D})\pi_{BD(D)}^{*}Ng+(1-\mathring{D}G)\pi_{BD(G)}^{*}N^{\ast}\stackrel{\bullet}{G}g|f\rangle_{H_{0}(|G|+\i)}\\
 & = & \frac{1}{2}\langle(D-\mathring{D}GD)\pi_{BD(D)}^{*}Ng+(1-\mathring{D}G)\pi_{BD(G)}^{*}N^{\ast}\stackrel{\bullet}{G}g|f\rangle_{H_{0}(|G|+\i)}\\
 & = & \frac{1}{2}\langle(1-\mathring{D}G)\pi_{BD(G)}^{*}(\stackrel{\bullet}{D}N+N^{\ast}\stackrel{\bullet}{G})g|f\rangle_{H_{0}(|G|+\i)}\\
 & = & \frac{1}{2}\langle(\stackrel{\bullet}{D}N+N^{\ast}\stackrel{\bullet}{G})g|f\rangle_{H_{0}(|G|+\i)}+\frac{1}{2}\langle\stackrel{\bullet}{G}(\stackrel{\bullet}{D}N+N^{\ast}\stackrel{\bullet}{G})g|\stackrel{\bullet}{G}f\rangle_{H_{0}(|D|+\i)}\\
 & = & \frac{1}{2}\langle(\stackrel{\bullet}{D}N+N^{\ast}\stackrel{\bullet}{G})g|f\rangle_{BD(G)},
\end{eqnarray*}

\end{example}
which gives 
\[
j^{\ast}g=\frac{1}{2}(\stackrel{\bullet}{D}N+N^{\ast}\stackrel{\bullet}{G})g
\]

or 
\[
\stackrel{\bullet}{G}j^{\ast}g=\left(\frac{1}{2}N+\frac{1}{2}\stackrel{\bullet}{G}N^{\ast}\stackrel{\bullet}{G}\right)g.
\]

This yields 
\begin{equation}
(D-\mathring{D})\pi_{BD(D)}^{*}\stackrel{\bullet}{G}j^{\ast}g=\frac{1}{2}(D-\mathring{D})\pi_{BD(D)}^{*}\left(N+\stackrel{\bullet}{G}N^{\ast}\stackrel{\bullet}{G}\right)g.\label{eq:bc-}
\end{equation}

Let us try to interpret this equation in order to underscore that
this can indeed be considered as an equation between classical boundary
traces if the boundary is sufficiently smooth. So, let $\Omega\subseteq\mathbb{R}^{n}$
be open and let $\grad$ be the weak gradient in $L^{2}(\Omega)$
as introduced in Subsection \ref{sub:Boundary-control-and-wave} and
let $\dive$ be the weak divergence from $L^{2}(\Omega)^{n}$ to $L^{2}(\Omega)$.
We denote the boundary of $\Omega$ by $\Gamma$. Assume that $\Gamma\neq\emptyset$
and that any function $f\in D(\grad)$ admits a trace $f|_{\Gamma}\in L^{2}(\Gamma)$
with continuous trace operator. Moreover, assume that there exists
a well-defined unit outward normal $\mathrm{n}:\Gamma\to\mathbb{R}^{n}$
being such that there exists an extension to $\Omega$ in a way that
this extension (denoted by the same name) satisfies $\mathrm{n}\in L_{\infty}(\Omega)^{n}$
with distributional divergence lying in $L^{\infty}(\Omega)$. Then
the operator $\tilde{N}\colon H_{1}(|\grad|+\i)\to H_{1}(|\dive|+\i),f\mapsto\mathrm{n}f$
is well-defined and continuous. For the choices $D=\dive$, $G=\grad$
and $N=\pi_{BD(\dive)}\tilde{N}\pi_{BD(\grad)}^{*}$ in (\ref{eq:bc-})
we can interpret (\ref{eq:bc-}) as the equality of the Neumann trace
of $\stackrel{\bullet}{G}j^{*}g$ and the trace of $g$. Indeed, for
$f,g\in BD(\grad)$ we compute formally with the help of the divergence
theorem\foreignlanguage{english}{
\begin{eqnarray*}
\intop_{\Gamma}\grad j^{\ast}g\cdot\mathrm{n}\; f & = & \frac{1}{2}\intop_{\Gamma}Ng\cdot\mathrm{n}\; f+\frac{1}{2}\langle N^{\ast}\grad g|f\rangle_{H_{0}(|\grad|+\i)}\\
 &  & +\frac{1}{2}\langle\grad N^{\ast}\grad g|\grad f\rangle_{H_{0}(|\dive|+\i)}\\
 & = & \frac{1}{2}\intop_{\Gamma}Ng\cdot\mathrm{n}\; f+\frac{1}{2}\langle N^{\ast}\grad g|f\rangle_{H_{1}(|\grad|+\i)}\\
 & = & \frac{1}{2}\intop_{\Gamma}Ng\cdot\mathrm{n}\; f+\frac{1}{2}\langle g|\dive(Nf)\rangle_{H_{1}(|\grad|+\i)}\\
 & = & \frac{1}{2}\intop_{\Gamma}Ng\cdot\mathrm{n}\; f+\frac{1}{2}\intop_{\Omega}\dive(gNf)\\
 & = & \frac{1}{2}\intop_{\Gamma}gf+\frac{1}{2}\intop_{\Gamma}g(Nf)\cdot\mathrm{n}\\
 & = & \intop_{\Gamma}gf.
\end{eqnarray*}
}

\section{Some Further Applications\label{sec:Applications}}

\subsection{Boundary Control and Observation for Acoustic Waves \label{sub:Boundary-control-and-wave}}

We introduce the operator 
\[
\grad:D(\grad)\subseteq L^{2}(\Omega)\to L^{2}(\Omega)^{n}
\]

as the usual weak gradient in $L^{2}(\Omega)$ for a suitable domain
$\Omega\subseteq\mathbb{R}^{n}.$ We require that the geometric properties
of $\Omega$ are such that $\grad$ is injective and that the range
$\grad[L^{2}(\Omega)]$ is closed%
\footnote{This holds if a Poincare-Wirtinger-type inequality holds, which is
for example the case, if $\Omega$ is connected, bounded in one direction,
satisfies the segment property and possesses infinite Lebesgue-measure.%
} in $L^{2}(\Omega)^{n}$. We choose to use this assumption to avoid
technicalities. If $\grad$ is not injective, one has to proceed similarly
to the way presented in the next section. However, the assumption
on $\grad[L^{2}(\Omega)]\subseteq L^{2}(\Omega)^{n}$ to be closed
is essential. See also the discussion in \cite[Remark 3.1(a)]{TroWau_2012}.
We denote by $\pi_{\grad}:L^{2}(\Omega)^{n}\to\grad[L^{2}(\Omega)]$
the canonical projector induced by the orthogonal decomposition of
$L^{2}(\Omega)^{n}$ with respect to the closed subspace $\grad[L^{2}(\Omega)]$
and consider the operator $\pi_{\grad}\grad:D(\grad)\subseteq L^{2}(\Omega)\to\grad[L^{2}(\Omega)]$.
The negative adjoint of this operator is given by $\interior\dive\pi_{\grad}^{*}:D(\interior\dive)\cap\grad[L^{2}(\Omega)]\subseteq\grad[L^{2}(\Omega)]\to L^{2}(\Omega),$
where $\interior\dive$ is defined as the closure of the divergence
defined on the space of test functions $\interior C_{\infty}(\Omega)^{n}.$
In \cite[Section 7]{Weiss2003} a control system for the wave equation
has been discussed, which has its first order representation in the
system: 
\begin{align}
\left(\partial_{0}\left(\begin{array}{ccc}
1 & \left(\begin{array}{cc}
0 & 0\end{array}\right) & 0\\
\left(\begin{array}{c}
0\\
0
\end{array}\right) & \left(\begin{array}{cc}
1 & 0\\
0 & 0
\end{array}\right) & \left(\begin{array}{c}
0\\
0
\end{array}\right)\\
0 & \left(\begin{array}{cc}
0 & 0\end{array}\right) & 0
\end{array}\right)+\left(\begin{array}{ccc}
0 & \left(\begin{array}{cc}
0 & 0\end{array}\right) & 0\\
\left(\begin{array}{c}
0\\
0
\end{array}\right) & \left(\begin{array}{cc}
0 & 0\\
0 & 1
\end{array}\right) & \left(\begin{array}{c}
0\\
0
\end{array}\right)\\
0 & \left(\begin{array}{cc}
0 & \sqrt{2}\end{array}\right) & 1
\end{array}\right)\right.+\nonumber \\
\left.\left(\begin{array}{ccc}
0 & \left(-\begin{array}{cc}
\interior\dive|_{\grad[L^{2}(\Omega)]} & -C^{\diamond}\end{array}\right) & 0\\
\left(\begin{array}{c}
-\pi_{\grad}\grad\\
C
\end{array}\right) & \left(\begin{array}{cc}
0 & 0\\
0 & 0
\end{array}\right) & \left(\begin{array}{c}
0\\
0
\end{array}\right)\\
0 & \left(\begin{array}{cc}
0 & 0\end{array}\right) & 0
\end{array}\right)\right)\left(\begin{array}{c}
v\\
\left(\begin{array}{c}
\zeta\\
w
\end{array}\right)\\
y
\end{array}\right)\label{eq:TW}\\
=\delta\otimes\left(\begin{array}{c}
z^{(1)}\\
\left(\begin{array}{c}
z^{(0)}\\
0
\end{array}\right)\\
0
\end{array}\right)+\left(\begin{array}{c}
0\\
\left(\begin{array}{c}
0\\
-\sqrt{2}
\end{array}\right)\\
-1
\end{array}\right)u\nonumber 
\end{align}

Using the Hilbert space $U$ from Example \ref{We-discuss-a}, we
define the operator $C$ by%
\footnote{Note that $|\pi_{\grad}\grad|=|\grad|.$%
} 

\begin{align*}
C:H_{1}\left(|\grad|+\i\right) & \to U\\
u & \mapsto-bj\pi_{BD(\grad)}u,
\end{align*}

where $b\in L(U)$. Then we are in the situation of Theorem \ref{thm:adjoint_F}
and hence Corollary \ref{cor:control_well_posed} is applicable. The
state space of equation (\ref{eq:TW}) is given by $H=L^{2}(\Omega)\oplus\grad[L^{2}(\Omega)]\oplus U\oplus U.$
We compute $C^{\diamond}$ with respect to the Gelfand-triplet $H_{1}(|\grad|+\i)\subseteq H_{0}(|\grad|+\i)\subseteq H_{-1}(|\grad|+\i)$.
For $u\in H_{1}(|\grad|+\i),v\in BD(\grad)\subseteq U$, using Example
\ref{As-an-application}, we get that
\begin{align*}
\langle-C^{\diamond}v|u\rangle_{H_{0}(|\grad|+\i)} & =-\langle v|Cu\rangle_{U}\\
 & =\langle v|bj\pi_{BD(\grad)}u\rangle_{U}\\
 & =\langle j^{*}b^{*}v|\pi_{BD(\grad)}u\rangle_{BD(\grad)}\\
 & =\langle\pi_{BD(\grad)}^{\diamond}j^{*}b^{*}v|u\rangle_{H_{0}(|\grad|+\i)}\\
 & =\langle(\pi_{BD(\grad)}^{*}-\interior\dive\pi_{BD(\dive)}^{*}\stackrel{\bullet}{\grad})j^{*}b^{*}v|u\rangle_{H_{0}(|\grad|+\i)}\\
 & =\langle(\pi_{BD(\grad)}^{*}\stackrel{\bullet}{\dive}\stackrel{\bullet}{\grad}-\interior\dive\pi_{BD(\dive)}^{*}\stackrel{\bullet}{\grad})j^{*}b^{*}v|u\rangle_{H_{0}(|\grad|+\i)}\\
 & =\langle(\dive-\interior\dive)\pi_{BD(\dive)}^{*}\stackrel{\bullet}{\grad}j^{*}b^{*}v|u\rangle_{H_{0}(|\grad|+\i)}
\end{align*}

and we read off that $C^{\diamond}v=\left(-(\dive-\interior\dive)\pi_{BD(\dive)}^{*}\stackrel{\bullet}{\grad}j^{*}b^{*}\right)v\in H_{-1}(|\grad|+\i)$
for all $v\in BD(\grad)\subseteq U$. Hence, using (\ref{eq:bd}),
we write the boundary equation as
\[
\pi_{BD(\dive)}\left(\zeta-\interior\dive|_{\grad[L^{2}(\Omega)]}^{-1}\left((\dive-\interior\dive)\pi_{BD(\dive)}^{*}\stackrel{\bullet}{\grad}j^{*}b^{*}\right)w\right)=0.
\]
Since 
\[
\interior\dive|_{\grad[L^{2}(\Omega)]}^{-1}\left((\dive-\interior\dive)\pi_{BD(\dive)}^{*}\stackrel{\bullet}{\grad}j^{*}b^{*}\right)w\in H_{1}(|\dive|+\i),
\]

we get that
\begin{align*}
\pi_{BD(\dive)}\zeta & =\pi_{BD(\dive)}\interior\dive|_{\grad[L^{2}(\Omega)]}^{-1}\left((\dive-\interior\dive)\pi_{BD(\dive)}^{*}\stackrel{\bullet}{\grad}j^{*}b^{*}\right)w\\
 & =-\stackrel{\bullet}{\grad}j^{*}b^{*}w.
\end{align*}

To invoke the boundary control and observation equation we compute
\[
\left(\begin{array}{c}
w\\
y
\end{array}\right)=\left(\begin{array}{cc}
1 & 0\\
\sqrt{2} & 1
\end{array}\right)^{-1}\left(\begin{array}{c}
-\sqrt{2}u-Cv\\
-u
\end{array}\right)=\left(\begin{array}{cc}
1 & 0\\
-\sqrt{2} & 1
\end{array}\right)\left(\begin{array}{c}
-\sqrt{2}u-Cv\\
-u
\end{array}\right)
\]

and 
\[
\left(\begin{array}{c}
w\\
u
\end{array}\right)=-\left(\begin{array}{cc}
1 & \sqrt{2}\\
\sqrt{2} & 1
\end{array}\right)^{-1}\left(\begin{array}{c}
Cv\\
y
\end{array}\right)=\left(\begin{array}{cc}
1 & -\sqrt{2}\\
-\sqrt{2} & 1
\end{array}\right)\left(\begin{array}{c}
Cv\\
y
\end{array}\right).
\]

Thus, we get $w=-\sqrt{2}u-Cv$ and $w=Cv-\sqrt{2}y.$ This yields
\[
\pi_{BD(\dive)}\zeta=\sqrt{2}\stackrel{\bullet}{\grad}j^{*}b^{\ast}u-\stackrel{\bullet}{\grad}j^{*}b^{\ast}bj\pi_{BD(\grad)}v
\]

and 
\[
\pi_{BD(\dive)}\zeta=\stackrel{\bullet}{\grad}j^{*}b^{\ast}bj\pi_{BD(\grad)}v+\sqrt{2}\stackrel{\bullet}{\grad}j^{*}b^{\ast}y.
\]

\begin{rem}
Let us assume that there exists a outward unit normal $\mathrm{n}$
on $\Gamma\coloneqq\overline{\Omega}\setminus\interior\Omega$ such
that there exists a bounded, measurable extension to $\Omega$ with
bounded, measurable distributional divergence. Using the interpretation
from Example \ref{We-discuss-a}, the assumption $b^{*}u,b^{*}Cv,b^{*}y\in BD(\grad)$%
\footnote{In \cite{Weiss2003} these assumptions are formulated with the help
of a certain quotient space $Z_{0}$.%
} and imposing suitable additional requirements on the underlying domain,
we can interpret the latter equations as
\begin{align*}
\mathrm{n}\cdot\zeta & =-b^{*}bv+\sqrt{2}b^{*}u\\
\mathrm{n}\cdot\zeta & =b^{*}bv+\sqrt{2}b^{*}y
\end{align*}
on $\Gamma$ as boundary control and boundary observation equation,
respectively. These correspond to the boundary equations originally
considered in \cite[Section 7]{Weiss2003}. 
\end{rem}
~
\begin{rem}
(a) It is also possible to consider a model, where the type of the
partial differential equations changes over the space, i.e., there
are regions, where the equation is parabolic others where the equation
is hyperbolic and regions where the equation is described best by
elliptic. More precisely, assume the open set $\Omega$ under consideration
can be decomposed into three pairwise disjoint measurable parts $\Omega_{e},$
$\Omega_{p}$, $\Omega_{h}$ such that the evolutionary equation may
be written as
\begin{align*}
 & \left(\partial_{0}\left(\begin{array}{ccc}
\chi_{\Omega_{h}}+\chi_{\Omega_{p}} & \left(\begin{array}{cc}
0 & 0\end{array}\right) & 0\\
\left(\begin{array}{c}
0\\
0
\end{array}\right) & \left(\begin{array}{cc}
\chi_{\Omega_{h}} & 0\\
0 & 0
\end{array}\right) & \left(\begin{array}{c}
0\\
0
\end{array}\right)\\
0 & \left(\begin{array}{cc}
0 & 0\end{array}\right) & 0
\end{array}\right)+\left(\begin{array}{ccc}
\chi_{\Omega_{e}} & \left(\begin{array}{cc}
0 & 0\end{array}\right) & 0\\
\left(\begin{array}{c}
0\\
0
\end{array}\right) & \left(\begin{array}{cc}
\chi_{\Omega_{e}}+\chi_{\Omega_{p}} & 0\\
0 & 1
\end{array}\right) & \left(\begin{array}{c}
0\\
0
\end{array}\right)\\
0 & \left(\begin{array}{cc}
0 & \sqrt{2}\end{array}\right) & 1
\end{array}\right)\right.\\
 & +\left.\left(\begin{array}{ccc}
0 & \left(-\begin{array}{cc}
\interior\dive|_{\grad[L^{2}(\Omega)]} & -C^{\diamond}\end{array}\right) & 0\\
\left(\begin{array}{c}
-\pi_{\grad}\grad\\
C
\end{array}\right) & \left(\begin{array}{cc}
0 & 0\\
0 & 0
\end{array}\right) & \left(\begin{array}{c}
0\\
0
\end{array}\right)\\
0 & \left(\begin{array}{cc}
0 & 0\end{array}\right) & 0
\end{array}\right)\right)\left(\begin{array}{c}
v\\
\left(\begin{array}{c}
\zeta\\
w
\end{array}\right)\\
y
\end{array}\right)\\
 & =\delta\otimes\left(\begin{array}{c}
\left(\chi_{\Omega_{h}}+\chi_{\Omega_{p}}\right)z^{(1)}\\
\left(\begin{array}{c}
\chi_{\Omega_{h}}z^{(0)}\\
0
\end{array}\right)\\
0
\end{array}\right)+\left(\begin{array}{c}
0\\
\left(\begin{array}{c}
0\\
-\sqrt{2}
\end{array}\right)\\
-1
\end{array}\right)u.
\end{align*}
Obviously, the well-posedness condition in Corollary \ref{cor:control_well_posed}
is still satisfied. As it can be verified immediately from the equations
in Remark \ref{control_observation_equation}, the control and observation
equations remain the same. However, we find different types of equations
describing the main physical phenomenon. In particular, on $\Omega_{e}$
we have 
\begin{align*}
 & \left(\left(\begin{array}{ccc}
1 & \left(\begin{array}{cc}
0 & 0\end{array}\right) & 0\\
\left(\begin{array}{c}
0\\
0
\end{array}\right) & \left(\begin{array}{cc}
1 & 0\\
0 & 1
\end{array}\right) & \left(\begin{array}{c}
0\\
0
\end{array}\right)\\
0 & \left(\begin{array}{cc}
0 & \sqrt{2}\end{array}\right) & 1
\end{array}\right)\right.+\left.\left(\begin{array}{ccc}
0 & \left(-\begin{array}{cc}
\interior\dive|_{\grad[L^{2}(\Omega)]} & -C^{\diamond}\end{array}\right) & 0\\
\left(\begin{array}{c}
-\pi_{\grad}\grad\\
C
\end{array}\right) & \left(\begin{array}{cc}
0 & 0\\
0 & 0
\end{array}\right) & \left(\begin{array}{c}
0\\
0
\end{array}\right)\\
0 & \left(\begin{array}{cc}
0 & 0\end{array}\right) & 0
\end{array}\right)\right)\left(\begin{array}{c}
v\\
\left(\begin{array}{c}
\zeta\\
w
\end{array}\right)\\
y
\end{array}\right)\\
 & =\left(\begin{array}{c}
0\\
\left(\begin{array}{c}
0\\
-\sqrt{2}
\end{array}\right)\\
-1
\end{array}\right)u.
\end{align*}
Thus, 
\begin{align*}
v-\begin{array}{cc}
\interior\dive|_{\grad[L^{2}(\Omega)]}\zeta & -C^{\diamond}w\end{array} & =0\\
\zeta-\pi_{\grad}\grad v & =0\\
w+Cv & =-\sqrt{2}u\\
\sqrt{2}w+v & =-u,
\end{align*}
which gives
\[
v-\dive\grad v=0
\]
with the (formal) boundary conditions 
\begin{align*}
\mathrm{n}\cdot\grad v & =-b^{*}bv+\sqrt{2}b^{*}u,\\
\mathrm{n}\cdot\grad v & =b^{*}bv+\sqrt{2}b^{*}y.
\end{align*}
 On $\Omega_{p}$ we get, by similar computations,
\[
\partial_{0}v-\dive\grad v=\delta\otimes z^{(1)}
\]
with the (formal) boundary conditions 
\begin{align*}
\mathrm{n}\cdot\grad v & =-b^{*}bv+\sqrt{2}b^{*}u,\\
\mathrm{n}\cdot\grad v & =b^{*}bv+\sqrt{2}b^{*}y,
\end{align*}
and on $\Omega_{h}$ we get correspondingly
\[
\partial_{0}^{2}v-\dive\grad v=\partial_{0}\delta\otimes z^{(1)}+\delta\otimes z^{(0)}
\]
with the same equations on the boundary.

(b) The last example treats local operators with respect to the spatial
variables. Unless the well-posedness condition in Corollary \ref{cor:control_well_posed}
is not violated, we can also treat integral operators as coefficients.
Indeed, the equation 
\begin{align*}
 & \left(\partial_{0}\left(\begin{array}{ccc}
M_{0,00} & \left(\begin{array}{cc}
M_{0,01} & 0\end{array}\right) & 0\\
\left(\begin{array}{c}
M_{0,10}\\
0
\end{array}\right) & \left(\begin{array}{cc}
M_{0,11} & 0\\
0 & 0
\end{array}\right) & \left(\begin{array}{c}
0\\
0
\end{array}\right)\\
0 & \left(\begin{array}{cc}
0 & 0\end{array}\right) & 0
\end{array}\right)+\left(\begin{array}{ccc}
M_{1,00} & \left(\begin{array}{cc}
M_{1,01} & 0\end{array}\right) & 0\\
\left(\begin{array}{c}
M_{1,10}\\
0
\end{array}\right) & \left(\begin{array}{cc}
M_{1,11} & 0\\
0 & 1
\end{array}\right) & \left(\begin{array}{c}
0\\
0
\end{array}\right)\\
0 & \left(\begin{array}{cc}
0 & \sqrt{2}\end{array}\right) & 1
\end{array}\right)\right.\\
 & +\left.\left(\begin{array}{ccc}
0 & \left(-\begin{array}{cc}
\interior\dive|_{\grad[L^{2}(\Omega)]} & -C^{\diamond}\end{array}\right) & 0\\
\left(\begin{array}{c}
-\pi_{\grad}\grad\\
C
\end{array}\right) & \left(\begin{array}{cc}
0 & 0\\
0 & 0
\end{array}\right) & \left(\begin{array}{c}
0\\
0
\end{array}\right)\\
0 & \left(\begin{array}{cc}
0 & 0\end{array}\right) & 0
\end{array}\right)\right)\left(\begin{array}{c}
v\\
\left(\begin{array}{c}
\zeta\\
w
\end{array}\right)\\
y
\end{array}\right)\\
 & =\delta\otimes\left(\begin{array}{ccc}
M_{0,00} & \left(\begin{array}{cc}
M_{0,01} & 0\end{array}\right) & 0\\
\left(\begin{array}{c}
M_{0,10}\\
0
\end{array}\right) & \left(\begin{array}{cc}
M_{0,11} & 0\\
0 & 0
\end{array}\right) & \left(\begin{array}{c}
0\\
0
\end{array}\right)\\
0 & \left(\begin{array}{cc}
0 & 0\end{array}\right) & 0
\end{array}\right)\left(\begin{array}{c}
z^{(1)}\\
\left(\begin{array}{c}
z^{(0)}\\
0
\end{array}\right)\\
0
\end{array}\right)+\left(\begin{array}{c}
0\\
\left(\begin{array}{c}
0\\
-\sqrt{2}
\end{array}\right)\\
-1
\end{array}\right)u
\end{align*}
leads to the same observation and control equation as in (a), but
the operators $M_{i,jk}$ for $i,j,k\in\{0,1\}$ can be matrices with
variable coefficients or integral operators such as negative roots
of the negative Laplacian. 
\end{rem}

\subsection{Boundary Control for Electromagnetic Waves\label{sub:Maxwell's-Equation-with}}

As a second example we consider a boundary control problem for Maxwell's
system. We shall first introduce the operators involved. Throughout
let $\Omega\subseteq\mathbb{R}^{3}$ be an open domain.
\begin{defn}
We define the operator $\interior\curl$ as the closure of the operator
\begin{align*}
\interior C_{\infty}(\Omega)^{3}\subseteq L^{2}(\Omega)^{3} & \to L^{2}(\Omega)^{3}\\
(\phi_{1},\phi_{2},\phi_{3})^{T} & \mapsto\left(\begin{array}{ccc}
0 & -\partial_{3} & \partial_{2}\\
\partial_{3} & 0 & -\partial_{1}\\
-\partial_{2} & \partial_{1} & 0
\end{array}\right)\left(\begin{array}{c}
\phi_{1}\\
\phi_{2}\\
\phi_{3}
\end{array}\right),
\end{align*}
where $\partial_{i}$ denotes the partial derivative with respect
to the $i$-th coordinate. The operator $\interior\curl$ turns out
to be symmetric and we set $\curl\coloneqq\left(\interior\curl\right)^{\ast}$
and obtain the relation 
\[
\interior\curl\subseteq\curl.
\]

\end{defn}
In \cite{Weck2000} the exact controllability of the following problem
was considered 
\begin{align*}
\partial_{0}\varepsilon E+\curl H & =\delta\otimes E^{(0)},\\
\partial_{0}\mu H-\curl E & =\delta\otimes H^{(0)},
\end{align*}

where the control $u\in BD(\curl)$ prescribes the boundary behaviour
of the tangential component of $H,$ i.e., $\pi_{BD(\curl)}H=u.$
This problem can be dealt with in the following way: We introduce
the function $\tilde{H}\coloneqq H-\pi_{BD(\curl)}^{\ast}u$ and formulate
Maxwell's equations for the pair $(E,\tilde{H})$ as follows 
\begin{align*}
\partial_{0}\varepsilon E+\mathring{\curl}\tilde{H} & =\delta\otimes E^{(0)}-\curl\pi_{BD(\curl)}^{\ast}u,\\
\partial_{0}\mu\tilde{H}-\curl E & =\delta\otimes H^{(0)}-\partial_{0}\mu\pi_{BD(\curl)}^{\ast}u
\end{align*}

or in matrix-form 
\[
\left(\partial_{0}\left(\begin{array}{cc}
\varepsilon & 0\\
0 & \mu
\end{array}\right)+\left(\begin{array}{cc}
0 & \mathring{\curl}\\
-\curl & 0
\end{array}\right)\right)\left(\begin{array}{c}
E\\
\tilde{H}
\end{array}\right)=\delta\otimes\left(\begin{array}{c}
E^{(0)}\\
H^{(0)}
\end{array}\right)-\left(\begin{array}{c}
\curl\pi_{BD(\curl)}^{\ast}\\
\partial_{0}\mu\pi_{BD(\curl)}^{\ast}
\end{array}\right)u.
\]

By our general solution theory (Theorem \ref{Thm: sol_th_ev_syst}
and Proposition \ref{Prop: distributional_time}) this system is well-posed
and we obtain a unique solution $(E,\tilde{H})\in H_{\nu,-1}(\mathbb{R};L^{2}(\Omega)^{3}\oplus L^{2}(\Omega)^{3}).$
Since the time derivative of $u$ occurs as a source term, we obtain
a regularity loss of the solution $(E,\tilde{H}),$ although the system
is locally regularizing. In order to detour this regularity loss,
we may follow the strategy of Subsection \ref{sub:Boundary-Control-and}
and point out, which type of boundary control equations can be treated
in this way. 

In the framework of Subsection \ref{sub:Boundary-Control-and}, we
want $\interior\curl$ to play the role%
\footnote{This implies $\interior G=-\interior\curl$ and $D=\curl.$%
} of $\interior D$ and $-\curl$ that of $G.$ In view of Theorem
\ref{thm:adjoint_F} we have to guarantee that $\curl$ is boundedly
invertible. For this purpose we consider the restriction of the operator
$\curl$ given by 
\begin{align*}
\widetilde{\curl}:D(\curl)\cap N\left(\curl\right)^{\bot}\subseteq N\left(\curl\right)^{\bot} & \to\overline{\curl[L^{2}(\Omega)^{3}]}.
\end{align*}

We require that $\Omega$ has suitable geometric properties such that
$\curl[L^{2}(\Omega)^{3}]$ is closed in order to obtain a boundedly
invertible operator.%
\footnote{For example, domains $\Omega\subseteq\mathbb{R}^{3}$ with conical
points, wedges and cups with a cross section satisfying the segment
property. In \cite{picardea-2001} a large class of such domains is
characterized for which the compactness of the embedding $D(\curl)\cap D(\interior\dive)\hookrightarrow L^{2}(\Omega)^{3}$
holds. This compact embedding result implies the desired properties
for $\widetilde{\curl}.$ %
} An easy computation shows that $\left(\widetilde{\curl}\right)^{\ast}=\interior\curl|_{\curl[L^{2}(\Omega)^{3}]}.$
We decompose the Hilbert space $L^{2}(\Omega)^{3}$ into the following
orthogonal subspaces 
\begin{align*}
L^{2}(\Omega)^{3} & =N\left(\curl\right)\oplus N\left(\curl\right)^{\bot}\\
L^{2}(\Omega)^{3} & =N\left(\interior\curl\right)\oplus N\left(\interior\curl\right)^{\bot}
\end{align*}

and denote by $\pi_{0}:L^{2}(\Omega)^{3}\to N\left(\curl\right),$
$\pi_{1}:L^{2}(\Omega)^{3}\to N\left(\curl\right)^{\bot},$ $\interior\pi_{0}:L^{2}(\Omega)^{3}\to N\left(\interior\curl\right)$
and $\interior\pi_{1}:L^{2}(\Omega)^{3}\to N\left(\interior\curl\right)^{\bot}$
the respective orthogonal projections. Since $\pi_{BD(\curl)}H=\pi_{BD(\curl)}\interior\pi_{1}H$
for each $H\in D(\curl)$ we may write the boundary control problem
in the following way
\begin{align*}
 & \left(\partial_{0}\left(\begin{array}{cccc}
\pi_{1}\varepsilon\pi_{1}^{\ast} & \left(\begin{array}{cc}
0 & 0\end{array}\right) & \pi_{1}\varepsilon\pi_{0}^{\ast} & 0\\
\left(\begin{array}{c}
0\\
0
\end{array}\right) & \left(\begin{array}{cc}
\interior\pi_{1}\mu\interior\pi_{1}^{\ast} & 0\\
0 & 0
\end{array}\right) & \left(\begin{array}{c}
0\\
0
\end{array}\right) & \left(\begin{array}{c}
\interior\pi_{1}\mu\interior\pi_{0}^{\ast}\\
0
\end{array}\right)\\
\pi_{0}\varepsilon\pi_{1}^{\ast} & \left(\begin{array}{cc}
0 & 0\end{array}\right) & \pi_{0}\varepsilon\pi_{0}^{\ast} & 0\\
0 & \left(\begin{array}{cc}
\interior\pi_{0}\mu\interior\pi_{1}^{\ast} & 0\end{array}\right) & 0 & \interior\pi_{0}\mu\interior\pi_{0}^{\ast}
\end{array}\right)\right.\\
 & \left.+\left(\begin{array}{cccc}
0 & \left(\begin{array}{cc}
0 & 0\end{array}\right) & 0 & 0\\
\left(\begin{array}{c}
0\\
M_{1,31}
\end{array}\right) & \left(\begin{array}{cc}
0 & 0\\
M_{1,32} & M_{1,33}
\end{array}\right) & \left(\begin{array}{c}
0\\
M_{1,34}
\end{array}\right) & \left(\begin{array}{c}
0\\
M_{1,35}
\end{array}\right)\\
0 & \left(\begin{array}{cc}
0 & 0\end{array}\right) & 0 & 0\\
0 & \left(\begin{array}{cc}
0 & 0\end{array}\right) & 0 & 0
\end{array}\right)\right.\\
 & \left.+\left(\begin{array}{cccc}
0 & \left(\begin{array}{cc}
\left(\widetilde{\curl}\right)^{\ast} & C^{\diamond}\end{array}\right) & 0 & 0\\
\left(\begin{array}{c}
-\widetilde{\curl}\\
-C
\end{array}\right) & \left(\begin{array}{cc}
0 & 0\\
0 & 0
\end{array}\right) & \left(\begin{array}{c}
0\\
0
\end{array}\right) & \left(\begin{array}{c}
0\\
0
\end{array}\right)\\
0 & \left(\begin{array}{cc}
0 & 0\end{array}\right) & 0 & 0\\
0 & \left(\begin{array}{cc}
0 & 0\end{array}\right) & 0 & 0
\end{array}\right)\right)\left(\begin{array}{c}
\pi_{1}E\\
\left(\begin{array}{c}
\interior\pi_{1}H\\
w
\end{array}\right)\\
\pi_{0}E\\
\interior\pi_{0}H
\end{array}\right)\\
 & =\delta\otimes\left(\begin{array}{c}
\pi_{1}E^{(0)}\\
\left(\begin{array}{c}
\interior\pi_{1}H^{(0)}\\
0
\end{array}\right)\\
\pi_{0}E^{(0)}\\
\interior\pi_{0}H^{(0)}
\end{array}\right)+\left(\begin{array}{c}
0\\
\left(\begin{array}{c}
0\\
Bu
\end{array}\right)\\
0\\
0
\end{array}\right),
\end{align*}

where 
\begin{align*}
C:H_{1}\left(|\widetilde{\curl}|+\i\right) & \to U
\end{align*}

is a bounded linear operator, $U$ an arbitrary Hilbert space and
$B\in L(U)$. The linear operators $M_{1,3i}$ for $i\in\{1,\ldots,5\}$
are bounded in the respective Hilbert spaces and $\Re M_{1,33}$ is
assumed to be strictly positive definite. Since $\widetilde{\curl}$
is boundedly invertible, Theorem \ref{thm:adjoint_F} applies and
Corollary \ref{cor:control_well_posed} yields the well-posedness
of the control problem. The domain of $\left(\begin{array}{cc}
\left(\widetilde{\curl}\right)^{\ast} & C^{\diamond}\end{array}\right)$ reads as 
\begin{align}
 & \left(\begin{array}{c}
\interior\pi_{1}H\\
w
\end{array}\right)\in D\left(\left(\begin{array}{cc}
\left(\widetilde{\curl}\right)^{\ast} & C^{\diamond}\end{array}\right)\right)\nonumber \\
 & \Leftrightarrow\left(\widetilde{\curl}\right)^{\ast}\interior\pi_{1}H+C^{\diamond}w\in H_{0}(|\widetilde{\curl}|+\i)\nonumber \\
 & \Leftrightarrow\left(\widetilde{\curl}\right)^{\ast}\left(\interior\pi_{1}H+\left(\left(\widetilde{\curl}\right)^{\ast}\right)^{-1}C^{\diamond}w\right)\in H_{0}(|\widetilde{\curl}|+\i)\nonumber \\
 & \Leftrightarrow\interior\pi_{1}H+\left(\left(\widetilde{\curl}\right)^{\ast}\right)^{-1}C^{\diamond}w\in H_{1}\left(\left|\left(\widetilde{\curl}\right)^{\ast}\right|+\i\right)\nonumber \\
 & \Leftrightarrow\pi_{BD(\curl)}\left(\interior\pi_{1}H+\left(\left(\widetilde{\curl}\right)^{\ast}\right)^{-1}C^{\diamond}w\right)=0\label{eq:bd_control}
\end{align}

for each $w\in U,H\in L^{2}(\Omega)^{3}.$ By the $3^{\text{{rd}}}$
equation of the above boundary control problem, we get that 
\[
w=-M_{1,33}^{-1}\left((M_{1,31}-C)\pi_{1}E+M_{1,32}\interior\pi_{1}H+M_{1,34}\pi_{0}E+M_{1,35}\interior\pi_{0}H-Bu\right)
\]

and hence (\ref{eq:bd_control}) yields

\begin{align*}
 & \pi_{BD(\curl)}\left(\interior\pi_{1}H-\left(\left(\widetilde{\curl}\right)^{\ast}\right)^{-1}C^{\diamond}M_{1,33}^{-1}\Big((M_{1,31}-C)\pi_{1}E+M_{1,32}\interior\pi_{1}H\right.\\
 & \left.\left.\phantom{\left(\left(\widetilde{\curl}\right)^{\ast}\right)^{-1}}+M_{1,34}\pi_{0}E+M_{1,35}\interior\pi_{0}H-Bu\right)\right)=0
\end{align*}

Although this equation covers a number of possible control equations,
it appears that in this setting the term $(M_{1,31}-C)\pi_{1}E$ cannot
be made to vanish, since we have to assume that $M_{1,31}$ is bounded
on $H_{0}(|\widetilde{\curl}|+\i)$ whereas in general $C$ is not.
This shows that in this setting only boundary control equations containing
terms in $\interior\pi_{1}H$ and $\pi_{1}E$ can be treated without
more intricate adjustments.

\end{document}